\documentclass[11pt,review]{article}
\usepackage{amsmath,amsthm,amssymb,amscd}
\usepackage{graphicx}
\usepackage{graphics}
\usepackage{verbatim}

\usepackage{mathrsfs}
\usepackage{color}
\usepackage[top=1in, bottom=1in, left=1.25in, right=1.25in]{geometry}
\usepackage{enumerate}

\newtheorem{theorem}{Theorem}[section]
\newtheorem{proposition}[theorem]{Proposition}

\newtheorem{lemma}[theorem]{Lemma}

\newtheorem{corollary}[theorem]{Corollary}
\newtheorem{claim}{Claim}[theorem]

\begin{document}

\title{Choosability in signed planar graphs}

\vspace{3cm}

\author{Ligang Jin\thanks{supported by Deutsche Forschungsgemeinschaft (DFG) grant STE 792/2-1}, Yingli Kang\thanks{Fellow of the International Graduate School ``Dynamic Intelligent Systems''}, Eckhard Steffen\thanks{
		Paderborn Institute for Advanced Studies in
		Computer Science and Engineering,
		Paderborn University,
		Warburger Str. 100,
		33098 Paderborn,
		Germany;			
		ligang@mail.upb.de, yingli@mail.upb.de, es@upb.de}}

\date{}

\maketitle

\begin{abstract}
This paper studies the choosability of signed planar graphs. We prove that every signed planar graph is 5-choosable and that there is a signed planar graph which is not 4-choosable while the unsigned graph is 4-choosable. For each $k \in \{3,4,5,6\}$, every signed planar graph without circuits of length $k$ is 4-choosable. Furthermore, every signed planar
graph without circuits of length 3 and of length 4 is 3-choosable.
We construct a signed planar graph with girth 4 which is
not 3-choosable but the unsigned graph is 3-choosable.
\end{abstract}

\section{Introduction}
This paper discusses simple graphs.
Let $G$ be a graph with vertex-set $V(G)$ and edge-set $E(G)$.
We say a vertex $u$ is a \emph{neighbor} of another vertex $v$ if $uv\in E(G)$.
If $v\in V(G)$, then $d(v)$ denotes the degree of $v$ and furthermore, $v$ is called a \emph{$k$-vertex} (or \emph{$k^+$-vertex} or \emph{$k^-$-vertex}) if $d(v)=k$ (or $d(v)\geq k$ or $d(v)\leq k$).
Similarly, a \emph{$k$-circuit} (or \emph{$k^+$-circuit} or \emph{$k^-$-circuit}) is a circuit of length $k$ (or at least $k$ or at most $k$), and if $G$ is planar then a \emph{$k$-face} (or \emph{$k^+$-face} or \emph{$k^-$-face}) is a face of size $k$ (or at least $k$ or at most $k$).
Let $[x_1\ldots x_k]$ denote a $k$-circuit with vertices $x_1,\ldots,x_k$ in cyclic order.
If $X\subseteq V(G)$, then $G[X]$ denotes the subgraph of $G$ induced by $X$, and $\partial(X)$ denotes the set of edges between $X$ and $V(G)\setminus X$.

Let $G$ be a graph and $\sigma: E(G)\rightarrow \{1,-1\}$ be a mapping. The pair $(G,\sigma)$ is called a \emph{signed graph}, and $\sigma$ is called a \emph{signature} of $G$. An edge $e$ is \emph{positive} (or \emph{negative}) if $\sigma(e)=1$ (or $\sigma(e)=-1$).
Denote by $(G,+)$ the signed graph $(G,\sigma)$ with $\sigma(e)=1$ for each $e\in E(G)$.
A graph with no signature is usually called an \emph{unsigned graph}.
A circuit of a signed graph is balanced (unbalanced) if it contains an even (odd) number of negative edges.

Zaslavsky \cite{Zaslavsky_1982} defines a (signed) coloring of a signed graph $(G,\sigma)$ with $k$ colors or with $2k+1$ signed colors to be a mapping
$c : V(G) \longrightarrow \{-k, -(k-1), \dots, -1,0,1 \dots, (k-1), k\}$ such that for every edge $uv$ of $G$, $c(u)\neq c(v)$ if $\sigma(uv)=1$, and $c(u)\neq -c(v)$ if $\sigma(uv)=-1$. 
Recently, M\'{a}\v{c}ajov\'{a}, Raspaud and \v{S}koviera \cite{Raspaud_2014} introduced a \emph{$k$-coloring} of $(G,\sigma)$ as a proper coloring of $(G,\sigma)$ using colors from $\{\pm1,\pm2,\dots,\pm\frac{k}{2}\}$ if $k\equiv0~(\text{mod}~2)$, and ones from $\{0,\pm1,\pm2,\dots,\pm\frac{k-1}{2}\}$ if $k\equiv1~(\text{mod}~2)$.
A signed graph $(G,\sigma)$ is \emph{$k$-colorable} if it admits a $k$-coloring.
The \emph{chromatic number} of $(G,\sigma)$ is the minimum number $k$ such that $(G,\sigma)$ is $k$-colorable. We follow the approach of \cite{Raspaud_2014} 
to define list colorings of signed graphs. Given a signed graph $(G,\sigma)$, a \emph{list-assignment} of $(G,\sigma)$ is a function $L$ defined on $V(G)$ such that $\emptyset\neq L(v)\subseteq \mathbb{Z}$ for each $v\in V(G)$. An \emph{$L$-coloring} of $(G,\sigma)$ is a proper coloring $c$ of $(G,\sigma)$ such that $c(v)\in L(v)$ for each $v\in V(G)$.
A list-assignment $L$ is called a \emph{$k$-list-assignment} if $|L(v)|=k$ for each $v\in V(G)$.
We say $(G,\sigma)$ is \emph{$k$-choosable} if it admits an $L$-coloring for every $k$-list-assignment $L$.
The \emph{choice number} of $(G,\sigma)$ is the minimum number $k$ such that $(G,\sigma)$ is $k$-choosable.
Clearly, if a signed graph is $k$-choosable, then it is also $k$-colorable.

Let $(G,\sigma)$ be a signed graph, $L$ be a list assignment of $(G,\sigma)$, and $c$ be an $L$-coloring of $(G,\sigma)$. Let $X \subseteq V(G)$. We say $\sigma', L'$ and $c'$ are obtained from $\sigma, L$ and $c$ by a \emph{switch} at $X$ if
$$\sigma '(e)=
\begin{cases}
&-\sigma (e),\quad \textrm{if} ~e\in \partial(X), \\
&\sigma (e), \quad \textrm{if} ~e\in E(G)\setminus \partial(X),
\end{cases}
\quad\quad
L'(u)=
\begin{cases}
&\{-\alpha \colon\ \alpha\in L(u)\},\quad \textrm{if} ~u\in X, \\
&L(u), \quad \textrm{if} ~u\in V(G)\setminus X,
\end{cases}
$$
$$
c'(u)=
\begin{cases}
&-c(u),\quad \textrm{if} ~u\in X, \\
&c(u), \quad \textrm{if} ~u\in V(G)\setminus X.
\end{cases}
$$
Two signed graphs $(G,\sigma)$ and $(G,\sigma^*)$ are {\em equivalent} if they can be obtained from each other by a switch at some subset of $V(G)$. Let $\mathcal{G}(G,\sigma)=\{(G,\sigma_1)\colon\ (G,\sigma_1)$ is equivalent to $(G,\sigma)\}$.

\begin{proposition} \label{switch}
Let $(G,\sigma)$ be a signed graph, $L$ be a list-assignment of $G$ and $c$ be an $L$-coloring of $(G,\sigma)$.
If $\sigma', L'$ and $c'$ are obtained from $\sigma, L$ and $c$ by a switch at a subset of $V(G)$,
then $c'$ is an $L'$-coloring of $(G,\sigma')$. Furthermore, two equivalent signed graphs have the same chromatic number and the same choice number.
\end{proposition}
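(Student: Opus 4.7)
The plan is to prove the first statement by a direct case analysis on the position of each edge's endpoints relative to $X$, and then derive the ``furthermore'' clause from the observation that a switch at $X$ is an involution.

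For the first claim, I must verify two things: (a) $c'(v)\in L'(v)$ for every $v\in V(G)$, and (b) $c'$ is a proper coloring of $(G,\sigma')$. Part (a) is immediate: if $v\in X$ then $c(v)\in L(v)$ forces $-c(v)\in L'(v)$, and if $v\notin X$ the lists and colors coincide. For (b), I partition the edges according to the intersection of $\{u,v\}$ with $X$. If $\{u,v\}\subseteq X$ or $\{u,v\}\cap X=\emptyset$, then $uv\notin\partial(X)$, so $\sigma'(uv)=\sigma(uv)$; in the first subcase both endpoint colors are negated, in the second neither, and the required (non-)equality for $c'$ follows from the one for $c$ by negating both sides when appropriate. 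If exactly one endpoint lies in $X$, then $uv\in\partial(X)$ so $\sigma'(uv)=-\sigma(uv)$; substituting $c'(u)=-c(u)$ and $c'(v)=c(v)$ into the condition $c'(u)\neq-c'(v)$ (when $\sigma'(uv)=-1$) yields exactly $c(u)\neq c(v)$, which is the condition for $\sigma(uv)=1$, and the opposite sign subcase is analogous.

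For the ``furthermore'' part, observe that a switch at $X$ is self-inverse: applying the operation twice restores the original triple $(\sigma,L,c)$. Hence the map $c\mapsto c'$ is a bijection between $L$-colorings of $(G,\sigma)$ and $L'$-colorings of $(G,\sigma')$. To compare chromatic numbers, take the constant list $L(v)=\{0,\pm 1,\ldots,\pm\lfloor k/2\rfloor\}$ (or the analogous set for even $k$); this list is symmetric under negation, so $L'=L$, and a $k$-coloring of one signed graph transfers to the other via the switch. For the choice number, any $k$-list-assignment $L^{\ast}$ of $(G,\sigma')$ is obtained by a switch at $X$ from the $k$-list-assignment $L$ of $(G,\sigma)$ defined by negating $L^{\ast}$ on $X$; if $(G,\sigma)$ is $k$-choosable, the resulting $L$-coloring switches to an $L^{\ast}$-coloring of $(G,\sigma')$, and by the symmetric argument the converse holds as well.

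The main obstacle, such as it is, lies entirely in bookkeeping in the third case of the edge verification, where one must correctly pair the sign change of $\sigma(uv)$ with the sign change of exactly one of $c(u),c(v)$; there is no conceptual difficulty beyond making sure that ``$\sigma(uv)=1$ and $c(u)\neq c(v)$'' on the original side matches ``$\sigma'(uv)=-1$ and $c'(u)\neq-c'(v)$'' on the switched side, and similarly with the roles of positive and negative exchanged.
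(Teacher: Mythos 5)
Your proof is correct: the edge-by-edge case analysis verifies that $c'$ is an $L'$-coloring of $(G,\sigma')$, and the involution property of the switch, together with the negation-symmetric color sets used for $k$-colorings and the negated-list trick for $k$-list-assignments, correctly yields equality of the chromatic and choice numbers. The paper states this proposition without proof, treating it as immediate, and your argument is precisely the standard verification that would be written out, so there is nothing that diverges from the paper's (omitted) reasoning.
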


Let $G$ be a graph. By definition, $G$ and $(G,+)$ have the same chromatic number and the same choice number.
Hence, the following statement holds.
\begin{corollary} \label{swith_corollary}
If $(G,\sigma)\in \mathcal{G}(G,+)$, then $G$ and $(G,\sigma)$ have the same chromatic number and the same choice number.
\end{corollary}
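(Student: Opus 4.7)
The plan is to chain together two elementary observations: first that $G$ and $(G,+)$ have the same chromatic and choice numbers (the ``By definition'' remark just made in the text), and second that $(G,+)$ and any member of $\mathcal{G}(G,+)$ have the same chromatic and choice numbers (which is Proposition \ref{switch}). Transitivity then gives the corollary.

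To make the first step precise, I would observe that the definition of a proper coloring of $(G,+)$ requires $c(u)\neq c(v)$ on every edge (since all edges are positive, the constraint $c(u)\neq -c(v)$ never appears), which is exactly the defining condition of a proper vertex coloring of the unsigned graph $G$. Since the palette $\{-k,\dots,k\}$ (or $\{\pm 1,\dots,\pm k/2\}$ in the even case) has the same number of usable elements as any $k$-element palette for the unsigned graph, a $k$-coloring of $G$ in the usual sense and a $k$-coloring of $(G,+)$ in the sense of \cite{Raspaud_2014} exist under the same conditions, giving equal chromatic numbers. The same argument works with lists: any $k$-list-assignment $L\colon V(G)\to\binom{\mathbb{Z}}{k}$ produces exactly the same set of admissible colorings whether we interpret it for $G$ or for $(G,+)$, so the choice numbers coincide as well.

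For the second step, I would simply invoke Proposition \ref{switch}: by hypothesis $(G,\sigma)\in \mathcal{G}(G,+)$, so $(G,\sigma)$ is equivalent to $(G,+)$, and the proposition tells us they share the same chromatic number and the same choice number. Combining this with the first step yields the corollary.

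There is no real obstacle here; the proof is a one-line consequence of Proposition \ref{switch} together with the tautological identification of unsigned proper (list) colorings with signed proper (list) colorings when the signature is identically $+1$. The only subtlety worth spelling out is the palette issue in the first step, to confirm that the restriction to symmetric color sets $\{-k,\dots,k\}$ in the signed setting does not accidentally change the chromatic or choice parameters when the signature imposes no sign-flipping constraint.
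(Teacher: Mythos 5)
Your proposal is correct and follows exactly the paper's argument: identify colorings of $G$ with colorings of $(G,+)$ (the ``by definition'' step) and then apply Proposition \ref{switch} to the equivalent pair $(G,+)$ and $(G,\sigma)$. The extra care you take about the symmetric palette is fine but not needed beyond what the paper treats as immediate.
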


This paper focusses on the choosability of signed planar graphs and generalizes the results of \cite{Fijavz_Juvan_Mohar_Skrekovski_2002, Lam_1999, Thomassen_1994, Thomassen_1995, Thomassen_2003, WeiFanWang_KWLih_2002} to signed graphs.
Section \ref{5c} proves that every signed planar graph is 5-choosable. Furthermore, there is a
signed planar graph $(G,\sigma)$  which is not 4-choosable, but $(G,+)$ is 4-choosable.
Section \ref{4c} proves for every $k \in \{3,4,5,6\}$ that every signed planar graph without $k$-circuits is 4-choosable.
Section \ref{3c} proves that every signed planar graph with neither 3-circuits nor 4-circuits is 3-choosable.
Furthermore,
there exists a signed planar graph $(G,\sigma)$ such that $G$ has girth 4 and $(G,\sigma)$ is not 3-choosable but $(G,+)$ is 3-choosable.

\section{5-choosability} \label{5c}

\begin{theorem} \label{5_choosable}
Every signed planar graph is 5-choosable.
\end{theorem}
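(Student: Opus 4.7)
The plan is to adapt Thomassen's classical proof that every planar graph is 5-choosable. The crucial observation for the signed setting is that along any edge $uv$, a given color $c(u)$ forbids exactly one value for the neighbor $v$, namely $\sigma(uv)\,c(u)\in\{c(u),-c(u)\}$. Consequently, Thomassen's counting arguments, which only rely on the ``one forbidden value per neighbor'' principle, transfer once we let lists be arbitrary subsets $L(v)\subseteq\mathbb{Z}$ and insert signs where appropriate.

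I would prove by induction on $|V(G)|$ the following strengthening: if $(G,\sigma)$ is a signed near-triangulation in the plane with outer cycle $C$, $x,y\in V(C)$ are adjacent, and $L$ is a list-assignment with $|L(x)|=|L(y)|=1$, $|L(v)|\ge 3$ for $v\in V(C)\setminus\{x,y\}$, and $|L(v)|\ge 5$ for every interior $v$, with the forced pre-coloring on $\{x,y\}$ proper for the edge $xy$, then $(G,\sigma)$ has an $L$-coloring. Theorem \ref{5_choosable} follows by first triangulating an arbitrary signed planar graph (assigning arbitrary signs to the added edges, which can only increase the choice number), then fixing any color for $x$ from $L(x)$ and any color in $L(y)$ different from $\sigma(xy)\,c(x)$; this is possible because $|L(y)|\ge 5$.

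The induction splits into Thomassen's two cases. If $C$ has a chord $uv$, it separates $G$ into two near-triangulations $G_1,G_2$ meeting at $uv$; I apply the induction to the side $G_1$ that contains $x,y$ to obtain colors $c(u),c(v)$, and then feed $\{c(u),c(v)\}$ as the forced pre-coloring into the induction call on $G_2$---it is automatically proper at $uv$ because the edge has the same sign in both subgraphs. If $C$ has no chord, I let $v\in V(C)$ be the $C$-neighbor of $x$ different from $y$, enumerate the neighbors of $v$ cyclically as $x,w_1,\dots,w_t,z$ (where $z$ is the other $C$-neighbor of $v$ and all $w_i$ are interior since $C$ has no chord), choose two values $\alpha,\beta\in L(v)$ both different from $\sigma(xv)\,c(x)$, and reduce each $L(w_i)$ to $L(w_i)\setminus\{\sigma(vw_i)\alpha,\sigma(vw_i)\beta\}$, which still has size at least $3$. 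Induction applied to $G-v$ (whose outer cycle is $C$ with the subpath $x\,v\,z$ replaced by $x\,w_1\cdots w_t\,z$) yields a coloring in which both $\alpha$ and $\beta$ are compatible with $c(x)$ and with every $c(w_i)$; since $c(z)$ forbids at most one of $\alpha,\beta$ at $v$, the other extends the coloring.

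I anticipate no substantive obstacle: the only place where signs play a role is the non-chord step, where one must verify that removing $\sigma(vw_i)\alpha$ and $\sigma(vw_i)\beta$ from $L(w_i)$ really forces $\sigma(vw_i)\,c(w_i)\notin\{\alpha,\beta\}$. This is immediate from $\sigma(vw_i)^2=1$, and all remaining topological and combinatorial moves---triangulation, chord-splitting, and boundary-vertex deletion---run identically to Thomassen's argument.
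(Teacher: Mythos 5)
Your proposal is correct and follows essentially the same route as the paper: the strengthening you state is exactly the paper's Theorem \ref{5_choosable_extend}, and your chord/no-chord induction with the ``product color'' $\sigma(uv)c(u)$ as the unique forbidden value matches the paper's signed adaptation of Thomassen's argument, including the deletion of the boundary neighbor of the precolored vertex with two reserved colors. The only details left implicit (the trivial base case and the standard triangulation step deriving the theorem from the strengthening) are unproblematic.
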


We use the method described in \cite{Thomassen_1994} to prove following theorem which implies Theorem \ref{5_choosable}.
A plane graph $G$ is a {\em near triangulation} if the boundary of each bounded face of $G$ is a triangle.

\begin{theorem} \label{5_choosable_extend}
Let $(G,\sigma)$ be a signed graph, where $G$ is a near-triangulation. Let $C$ be the boundary of the unbounded face of $G$ and $C=[v_1\ldots v_p]$.
If $L$ is a list-assignment of $(G,\sigma)$ such that $L(v_1)=\{\alpha\}$, $L(v_2)=\{\beta\}$ and $\alpha \neq \beta  \sigma(v_1v_2)$,
and that $|L(v)|\geq 3$ for $v\in V(C)\setminus \{v_1,v_2\}$ and $|L(v)|\geq 5$ for $v\in V(G)\setminus V(C)$, then $(G,\sigma)$ has an $L$-coloring.
\end{theorem}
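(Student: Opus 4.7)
The plan is to mimic Thomassen's classical induction on $|V(G)|$, carrying the edge signs carefully through each step.

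The base case is $|V(G)|=3$, so $G$ is the triangle $v_1v_2v_3$. Since $|L(v_3)|\geq 3$ and only the two values $\alpha\sigma(v_1v_3)$ and $\beta\sigma(v_2v_3)$ are forbidden for $v_3$, any remaining color in $L(v_3)$ completes the coloring.

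For the inductive step, I would split into two cases depending on whether the outer cycle $C$ has a chord. If $C$ has a chord $v_iv_j$, it divides $G$ into two near-triangulations $G_1$ and $G_2$ sharing that chord; letting $G_1$ be the piece containing the edge $v_1v_2$, induction on $(G_1,\sigma|_{G_1})$ extends the precoloring and, in particular, produces colors $c(v_i),c(v_j)$ with $c(v_i)\neq c(v_j)\sigma(v_iv_j)$. Applying induction to $(G_2,\sigma|_{G_2})$ with $v_i,v_j$ as the new precolored pair (their colors forming singleton lists there) completes the argument, the compatibility condition on the chord being automatic.

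If $C$ has no chord, I would focus on $v_p$, the neighbor of $v_1$ on $C$. Its neighbors in cyclic order form a fan $v_1, u_1, u_2,\ldots, u_t, v_{p-1}$, where $u_1,\ldots,u_t$ is a path of interior vertices with $u_1v_1, u_tv_{p-1}\in E(G)$, and $t\geq 1$ (otherwise either $v_1v_{p-1}$ is a chord of $C$ or $G$ is the base triangle). Choose two colors $\gamma_1,\gamma_2\in L(v_p)\setminus\{\alpha\sigma(v_1v_p)\}$, available because $|L(v_p)|\geq 3$. Define $L'(u_i)=L(u_i)\setminus\{\gamma_1\sigma(v_pu_i),\gamma_2\sigma(v_pu_i)\}$ for each $i$, and $L'(v)=L(v)$ otherwise; then $|L'(u_i)|\geq 3$. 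The graph $G-v_p$ is a near-triangulation with outer cycle $v_1 v_2\cdots v_{p-1}u_t\cdots u_1$, so the inductive hypothesis applies with lists $L'$ and the original precoloring of $v_1,v_2$. To extend the resulting coloring $c$ to $v_p$, pick whichever of $\gamma_1,\gamma_2$ differs from $c(v_{p-1})\sigma(v_{p-1}v_p)$; the constraint from $v_1$ holds by the choice of $\gamma_1,\gamma_2$, and the constraint from each $u_i$ is automatic because $c(u_i)\neq\gamma_j\sigma(v_pu_i)$ is equivalent to $\gamma_j\neq c(u_i)\sigma(v_pu_i)$.

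The main delicate point is the signed bookkeeping: the key move is to delete $\gamma_j\sigma(v_pu_i)$ rather than $\gamma_j$ itself from $L(u_i)$, which is precisely what makes the eventual color of $u_i$ compatible with $\gamma_j$ across the signed edge $v_pu_i$. One could alternatively invoke Proposition \ref{switch} and, at each inductive step, switch at $\{v_p\}$ to reduce to the case where all edges at $v_p$ are positive; but since switching only pushes the sign information into the neighbors' lists, the cleanest presentation is the direct one above.
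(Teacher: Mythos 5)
Your proposal is correct and follows essentially the same argument as the paper: induction on $|V(G)|$, splitting off a chord of $C$ when one exists, and otherwise deleting $v_p$ after reserving two colors $\gamma_1,\gamma_2\in L(v_p)\setminus\{\alpha\sigma(v_1v_p)\}$ and removing the sign-adjusted colors $\gamma_j\sigma(v_pu_i)$ from the fan neighbors' lists. The sign bookkeeping you highlight is exactly the adaptation the paper makes to Thomassen's proof.
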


\begin{proof}
Let us prove Theorem \ref{5_choosable_extend} by induction on $|V(G)|$.

If $|V(G)|=3$, then $p=3$ and $G=C$. Choose a color from $L(v_3)\setminus \{\alpha  \sigma(v_1v_3), \beta  \sigma(v_2v_3)\}$ for $v_3$. So we proceed to the induction step.

If $C$ has a chord which divides $G$ into two graphs $G_1$ and $G_2$, then we choose the notation such that $G_1$ contains $v_1v_2$, and  we apply the induction hypothesis first to $G_1$ and then to $G_2$. Hence, we can assume that $C$ has no chord.

Let $v_1,u_1,u_2,\ldots,u_m,v_{p-1}$ be the neighbors of $v_p$ in cyclic order around $v_p$.
Since the boundary of each bounded face of $G$ is a triangle, $G$ contains the path $P\colon v_1u_1\ldots u_mv_{p-1}$.
Since $C$ has no chord, $P\cup (C-v_p)$ is a circuit $C'$. Let $\gamma_1$ and $\gamma_2$ be two distinct colors of $L(v_p)\setminus \{\alpha  \sigma(v_1v_p)\}$. Define $L'(x)=L(x)\setminus \{\gamma_1  \sigma(v_px), \gamma_2  \sigma(v_px)\}$ for $x\in \{u_1,\ldots,u_m\}$, and $L'(x)=L(x)$ for $x\in V(G)\setminus \{v_p,u_1,\ldots,u_m\}$.
Let $\sigma'$ be the restriction of $\sigma$ to $G-v_p$.
By the induction hypothesis, signed graph $(G-v_p,\sigma')$ has an $L'$-coloring. Let $c$ be the color vertex $v_{p-1}$ receives. We choose a color from $\{\gamma_1, \gamma_2\}\setminus \{c \sigma(v_{p-1}v_p)\}$ for $v_p$, giving an $L$-coloring of $(G,\sigma)$.
\end{proof}

\subsection*{non-4-choosable examples}
Voigt \cite{Voigt_1993, Voigt_1997} constructed two planar graphs which are not 4-choosable.
By Corollary \ref{swith_corollary} these two examples generate two group of signed planar graphs which are not 4-choosable.
We extend this result to signed graphs.

\begin{theorem}
There exists a signed planar graph $(G,\sigma)$ such that $(G,\sigma)$ is not 4-choosable but $G$ is 4-choosable.
\end{theorem}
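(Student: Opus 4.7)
The plan is to exhibit a concrete signed planar graph $(G,\sigma)$ together with a 4-list-assignment $L$ that admits no $L$-coloring, and to verify simultaneously that the underlying graph $G$ is 4-choosable. By Corollary \ref{swith_corollary}, the signature $\sigma$ must be inequivalent to the all-positive signature, so $(G,\sigma)$ must contain at least one unbalanced circuit; otherwise the choice number of $(G,\sigma)$ coincides with that of $G$.

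A natural starting point is one of Voigt's planar graphs $H$ that fails to be 4-choosable under an explicit obstructing 4-list-assignment $L_0$. A useful preliminary observation is that if $L(u)$ and $L(v)$ are both symmetric about zero, then the signed constraint $c(u)\neq -c(v)$ at a negative edge $uv$ becomes, after applying the involution $\alpha\mapsto -\alpha$ on one side, the ordinary constraint $c(u)\neq c(v)$; consequently, to genuinely exploit negative edges in producing a new obstruction one must introduce \emph{asymmetric} lists at some vertices incident to negative edges. The strategy is therefore to modify $H$ by deleting or subdividing a critical edge $e=uv$, obtaining a graph $G$ that is 4-choosable, and then to reintroduce the effect of $e$ via a signed gadget: one takes a short path (or cycle) connecting $u$ and $v$ in $G$, declares an odd number of its edges negative, and tailors the lists on the gadget vertices so that the signed constraints around the resulting unbalanced circuit collapse to the forbidden identity (such as $c(u)=c(v)$) that made the original list-assignment $L_0$ unextendable.

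The execution comes down to two verifications. First, one must show that $G$ is 4-choosable; for a small modification of $H$ this is most naturally done by an ad-hoc Thomassen-style induction in the spirit of Theorem \ref{5_choosable_extend}, possibly invoking results from Section \ref{4c} once short-circuit conditions are arranged by the modification. Second, one must show that $(G,\sigma)$ admits no $L$-coloring; this reduces to a finite case analysis on the color of a chosen root vertex, propagating the forced colors through the signed gadget along the unbalanced circuit until every branch yields a conflict with $L_0$ on the unchanged part of $H$.

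The principal obstacle is calibrating the construction so both verifications go through on the same graph. Verifying that $G$ is 4-choosable is typically the harder direction, because Voigt-type graphs are designed to sit precisely at the boundary of 4-choosability, so the modification must kill the original obstruction for unsigned lists while remaining small enough that the signed gadget can re-create it. Once a specific $G$, $\sigma$, and $L$ are fixed, the remaining work is careful but essentially combinatorial bookkeeping rather than new structural input.
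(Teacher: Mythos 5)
Your proposal is a strategy outline rather than a proof: it never produces a concrete graph $G$, signature $\sigma$, or list-assignment $L$, and both of the required verifications are explicitly deferred (``calibrating the construction so both verifications go through'' is named as the principal obstacle, which is precisely the content of the theorem). Two specific steps are unjustified. First, you assert that deleting or subdividing a ``critical'' edge of a Voigt graph yields a 4-choosable planar graph; removing an edge only destroys the particular obstructing assignment $L_0$, and 4-choosability of the modified graph would have to be proved against \emph{all} 4-list-assignments --- you concede this is the harder direction and offer only a gesture toward a Thomassen-style induction, which is not known to apply to such graphs (planar graphs are not 4-choosable in general, so no generic induction is available). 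Second, you do not show that a signed gadget with asymmetric lists can actually re-create the forbidden identity $c(u)=c(v)$ inside a 4-list framework while keeping the gadget itself extendable for every unsigned 4-list-assignment; this is exactly the delicate balance the theorem requires, and no candidate gadget or case analysis is given. Your one substantive observation --- that negative edges are useless under lists symmetric about $0$, so asymmetric lists must be placed at negative edges --- is correct and is indeed reflected in any valid construction, but it does not by itself close either gap.

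For comparison, the paper does not modify a Voigt graph at all. It builds a graph from scratch: take $K_4$, insert a claw into every triangular face twice to obtain $G_3$, and replace each of the $24$ ``special'' 3-faces by a fixed plane gadget $H$ containing one negative edge $P_iQ_i$. The obstructing 4-list-assignment gives all $G_3$-vertices the list $\{1,2,3,4\}$ (forcing, by pigeonhole, some special triangle to receive the colors $1,2,3$ in a prescribed pattern), and gives the gadget vertices lists such as $\{2,5,6,-6\}$ so that the forced propagation ends at a balanced odd circuit or an unbalanced even circuit, neither of which is 2-choosable. The 4-choosability of the underlying unsigned $G$ is then shown directly: $G_3$ is 4-choosable, and any coloring of $G_3$ extends through each gadget because the (unsigned) even circuit $[M_iN_iQ_iP_i]$ is 2-choosable. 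If you want to salvage your plan, you would need to supply an explicit gadget and lists and, crucially, a genuine proof of 4-choosability of your modified graph --- the step your proposal leaves entirely open.
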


\begin{proof}
We construct $(G,\sigma)$ as follows.
Take a copy $G_1$ of complete graph $K_4$ and embed it into Euclidean plane.
Insert a claw into each 3-face of $G_1$ and denote the resulting graph by $G_2$.
Once again, insert a claw into each 3-face of $G_2$ and denote by $G_3$ the resulting graph.
A vertex $v$ of $G_3$ is called an initial-vertex if $v\in V(G_1)$, a solid-vertex if $v\in V(G_2)\setminus V(G_1)$ and a hollow-vertex if $v\in V(G_3)\setminus V(G_2)$ (Figure \ref{Fig1} illustrates graph $G_3$).
A 3-face of $G_3$ is called a special 3-face if it contains an initial-vertex, a solid-vertex and a hollow-vertex.
Clearly, $G_3$ has 24 special 3-faces, say $T_1,\ldots,T_{24}$.

Let $H$ be the plane graph as shown in Figure \ref{Fig2}, which consists of a circuit $[xyz]$ and its interior.
For $i\in \{1,\dots,24\}$, replace $T_i$ by a copy $H_i$ of $H$ such that $x_i, y_i$ and $z_i$ are identified with the solid-vertex, hollow-vertex and initial-vertex of $T_i$, respectively.
Let $G$ be the resulting graph. Clearly, $G$ is planar.

\begin{figure}[hh]
  \centering
  \includegraphics[width=5cm]{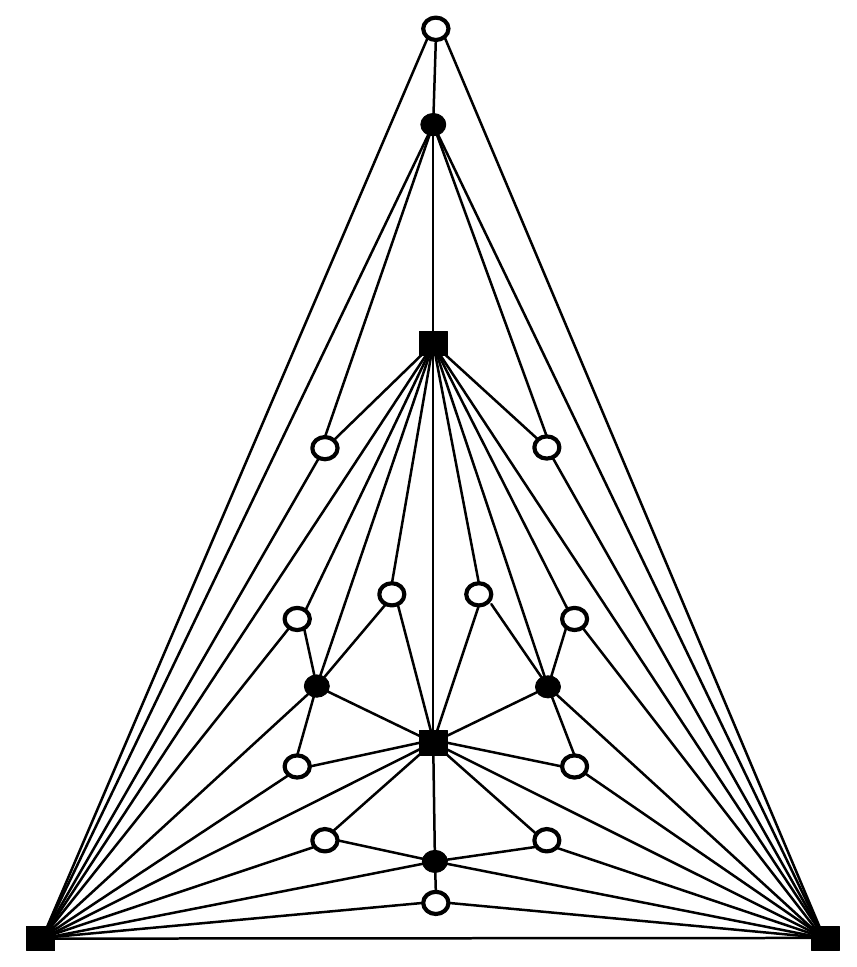}\\
  \caption{graph $G_3$}\label{Fig1}
\end{figure}

\begin{figure}[hh]
  \centering
  \includegraphics[width=6cm]{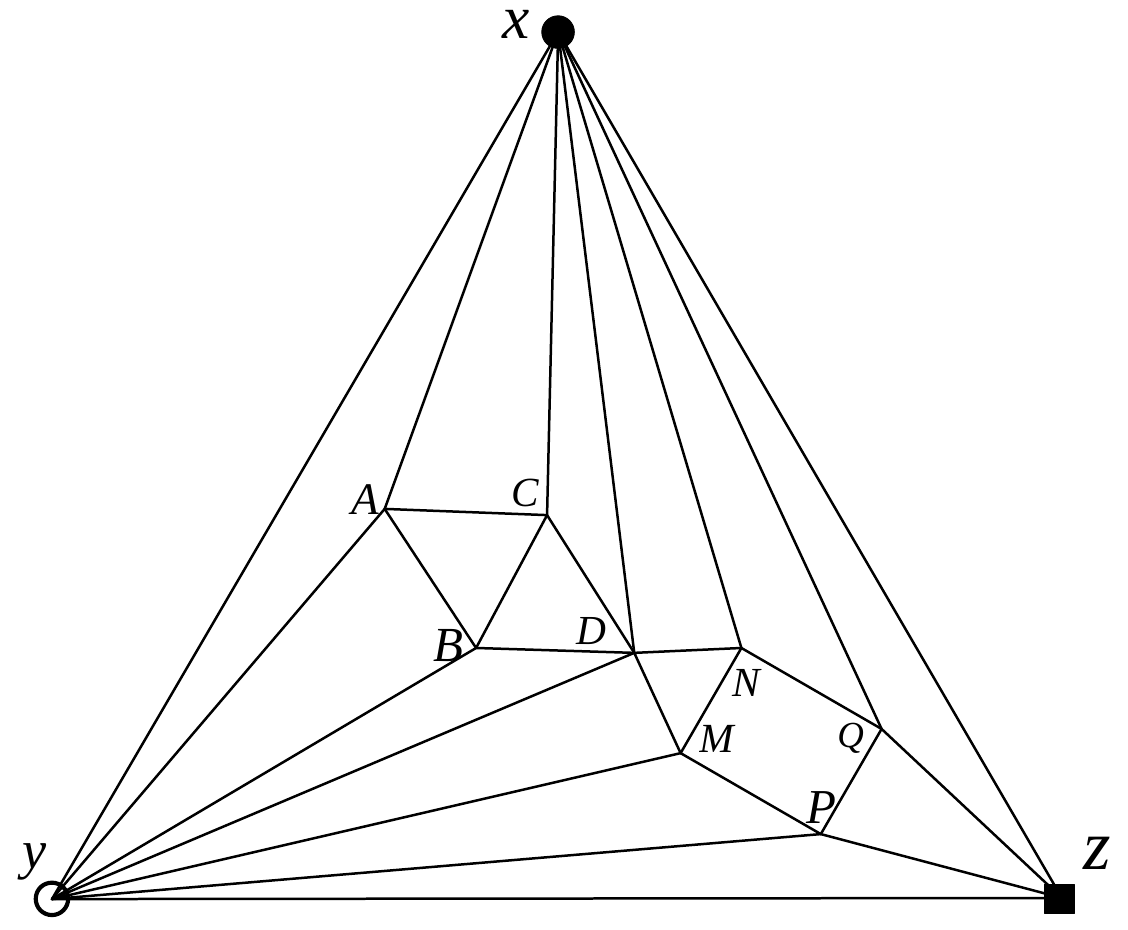}\\
  \caption{graph $H$}\label{Fig2}
\end{figure}

Define a signature $\sigma$ of $G$ as follows: $\sigma(P_iQ_i)=-1$ for $i \in \{1,\ldots,24\}$ and $\sigma(e)=1$ for $e\in E(G)\setminus \{P_iQ_i: i\in \{1,\ldots,24\}\}$.

Let $L$ be a 4-list-assignment of signed graph $(G,\sigma)$ defined as follows: $L(v)=\{1,2,3,4\}$ for $v\in V(G_3)$, and
$L(A_i)=\{1,2,6,7\}$, $L(B_i)=\{2,4,6,7\}$, $L(C_i)=\{1,4,6,7\}$, $L(D_i)=\{1,2,4,5\}$, $L(M_i)=\{2,5,6,-6\}$, $L(N_i)=\{1,5,6,-6\}$, $L(P_i)=\{2,3,6,-6\}$ and $L(Q_i)=\{1,3,6,-6\}$ for $i\in \{1,\ldots,24\}$.

We claim that signed graph $(G,\sigma)$ has no $L$-coloring. Suppose to the contrary that $\phi$ is an $L$-coloring of $(G,\sigma)$.
By the construction of $G_3$, precisely one of the special 3-faces of $G_3$ is assigned in $\phi$ color 1 to its solid-vertex, color 2 to its hollow-vertex and color 3 to its initial-vertex.
Without loss of generality, let $T_1$ be such a special 3-face. Let us consider $\phi$ in $H_1$.
Clearly, $\phi(x_1)=1, \phi(y_1)=2$ and $\phi(z_1)=3$. It follows that $\phi(D_1)\in \{4,5\}$.
Notice that the odd circuit $[A_1B_1C_1]$ is balanced and the even circuit $[M_1N_1Q_1P_1]$ is unbalanced, and thus both of them are not 2-choosable.
It follows that if $\phi(D_1)=4$, then $\phi$ is not proper in $[A_1B_1C_1]$,
and that if $\phi(D_1)=5$, then $\phi$ is not proper in $[M_1N_1Q_1P_1]$.
Therefore, $(G,\sigma)$ has no $L$-coloring and thus is not 4-choosable.

Let $L'$ be any 4-list-assignment of $G$.
By the construction, it is not hard to see that $G_3$ is 4-choosable.
Let $c$ be an $L'$-coloring of $G_3$.
Clearly, for $i\in \{1,\ldots,24\}$, each of vertices $x_i, y_i$ and $z_i$ receives a color in $c$. Let $\alpha$ and $\beta$ be two distinct colors from $L(D_i)\setminus \{c(x_i), c(y_i)\}$.
Choose a color from $L(C_i)\setminus \{\alpha, \beta, c(x_i)\}$ for $C_i$, and then vertices $A_i, B_i$ and $D_i$ can be list-colored by $L'$ in turn.
Since circuit $[M_iN_iQ_iP_i]$ is 2-choosable, it follows that vertices $M_i, N_i, P_i$ and $Q_i$ can also be list-colored by $L'$.
Therefore, $c$ can be extended to an $L'$-coloring of $G$.
This completes the proof that $G$ is 4-choosable.
\end{proof}

\section{4-choosability} \label{4c}

A graph $G$ is \emph{$d$-degenerate} if every subgraph $H$ of $G$ has a vertex of degree at most $d$ in $H$.
It is known that every $(d-1)$-degenerate graph is $d$-choosable. This proposition can be extended for signed graphs.

\begin{theorem} \label{degenerate_choosability}
Let $(G,\sigma)$ be a signed graph. If $G$ is $(d-1)$-degenerate, then $(G,\sigma)$ is $d$-choosable.
\end{theorem}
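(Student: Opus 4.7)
The plan is to mimic the standard unsigned argument, proceeding by induction on $|V(G)|$ and exploiting the fact that in a signed graph a neighbor still forbids exactly one color at a vertex.

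First I would handle the base case $|V(G)|=1$ trivially, since any color from $L(v)$ works. For the induction step, since $G$ is $(d-1)$-degenerate, $G$ itself contains a vertex $v$ with $d_G(v)\leq d-1$. Let $G'=G-v$ and let $\sigma'$ be the restriction of $\sigma$ to $E(G')$. Because every subgraph of $G$ is $(d-1)$-degenerate, $G'$ is also $(d-1)$-degenerate, so by the induction hypothesis applied to $(G',\sigma')$ with the list-assignment $L$ restricted to $V(G')$, there is an $L$-coloring $c'$ of $(G',\sigma')$.

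Next I would extend $c'$ to $v$. For each neighbor $u$ of $v$ in $G$, properness at the edge $uv$ forbids exactly one value at $v$, namely $c'(u)\,\sigma(uv)$, regardless of whether $\sigma(uv)=1$ or $\sigma(uv)=-1$. Hence the set of colors forbidden at $v$ has cardinality at most $d_G(v)\leq d-1$. Since $|L(v)|\geq d$, there exists some $\alpha\in L(v)$ with $\alpha\neq c'(u)\,\sigma(uv)$ for every neighbor $u$ of $v$. Setting $c(v)=\alpha$ and $c(w)=c'(w)$ for $w\in V(G')$ yields an $L$-coloring of $(G,\sigma)$.

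I do not anticipate a real obstacle here; the only conceptual point is verifying that a single neighbor contributes only one forbidden value at $v$ in the signed setting, so the classical greedy/degeneracy counting transfers verbatim. The proof needs no switching argument and no appeal to Proposition \ref{switch}.
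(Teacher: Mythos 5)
Your proof is correct and follows essentially the same route as the paper's: induction on $|V(G)|$, removing a vertex of degree at most $d-1$, and observing that each neighbor $u$ forbids only the single value $c'(u)\sigma(uv)$ at $v$, so a color remains in $L(v)$. No gaps.
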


\begin{proof}
(induction on $|V(G)|$) Let $L$ be any $d$-list-assignment of $G$. The proof is trivial if $|V(G)|=1$. For $|V(G)|\geq 2$, since $G$ is $(d-1)$-degenerate, $G$ has a vertex $v$ of degree at most $d-1$ and moreover, graph $G-v$ is $(d-1)$-degenerate. Let $\sigma'$ and $L'$ be the restriction of $\sigma$ and $L$ to $G-v$, respectively. By applying the induction hypothesis to $(G-v,\sigma')$, we conclude that $(G-v,\sigma')$ is $d$-choosable and thus has an $L'$-coloring $\phi$. Since $v$ has degree at most $d-1$, we can choose a color $\alpha$ for $v$ such that $\alpha\in L(v)\setminus \{\phi(u)\sigma(uv)\colon\ uv\in E(G)\}$.
We complete an $L$-coloring of $(G,\sigma)$ with $\phi$ and $\alpha$.
\end{proof}

It is an easy consequence of Euler's formula that every triangle-free planar graph contains a vertex of degree at most 3. Therefore, the following statement is true:
\begin{lemma} \label{3-degenerate3}
Planar graphs without 3-circuits are 3-degenerate.
\end{lemma}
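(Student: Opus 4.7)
The plan is to invoke Euler's formula to get an edge bound for triangle-free planar graphs, and then use a simple averaging argument on the degree sum.

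First I would note that the class of triangle-free planar graphs is closed under taking subgraphs: any subgraph of a planar graph is planar, and any subgraph of a triangle-free graph is triangle-free. Hence, to establish $3$-degeneracy it suffices to show that every triangle-free planar graph $H$ with $|V(H)| \geq 1$ contains a vertex of degree at most $3$.

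Next I would handle the trivial cases $|V(H)| \leq 2$ directly (every vertex has degree at most $1$), and for $|V(H)| \geq 3$ apply Euler's formula $n - e + f = 2$ together with the observation that, since $H$ is triangle-free, every face of a $2$-cell embedding is bounded by at least $4$ edges. The face-edge incidence inequality $2e \geq 4f$ yields $f \leq e/2$, which combined with Euler gives the familiar bound $e \leq 2n - 4$.

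Finally, I would argue by contradiction: if every vertex of $H$ had degree at least $4$, then $2e = \sum_{v \in V(H)} d(v) \geq 4n$, i.e. $e \geq 2n$, contradicting $e \leq 2n - 4$. Hence some vertex has degree at most $3$. Since this holds for every (triangle-free planar) subgraph of $G$, the graph $G$ is $3$-degenerate. The argument is entirely routine and I do not foresee any obstacle; the only thing to be slightly careful about is the small-$n$ case where Euler's formula does not directly apply, which is why the $|V(H)| \leq 2$ cases are dispatched separately.
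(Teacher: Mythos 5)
Your argument is correct and is exactly the route the paper takes: the paper simply asserts that the existence of a vertex of degree at most~$3$ in any triangle-free planar graph is ``an easy consequence of Euler's formula,'' which is precisely the $e\leq 2n-4$ bound plus averaging that you spell out. No gap; your write-up just makes the paper's one-line justification explicit.
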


Moreover, we will use two more lemmas.
\begin{lemma} [\cite {WeiFanWang_KWLih_2002}] \label{3-degenerate4}

Planar graphs without 5-circuits are 3-degenerate.
\end{lemma}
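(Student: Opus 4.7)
Since the property of being planar with no $5$-circuit is preserved by taking subgraphs, it suffices to show that any such graph $G$ contains a vertex of degree at most $3$. I would argue by contradiction: assume $G$ is planar, $5$-circuit-free, and satisfies $\delta(G)\geq 4$, fix a plane embedding, and derive a contradiction from Euler's formula via a discharging argument.

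The argument rests on the following structural observation: \emph{no $3$-face of $G$ shares an edge with a $4$-face}. If the $3$-face $[uvw]$ and the $4$-face $[uvxy]$ share the edge $uv$, then $u,w,v,x,y,u$ is a $5$-circuit (the vertices are distinct because $G$ is simple), contradicting the hypothesis. Hence every edge bordering a $3$-face is shared either with another $3$-face or with a face of length at least $6$. A similar local analysis gives secondary restrictions; for instance, a vertex $v$ of degree $4$ with neighbours $a,b,c,d$ in cyclic order cannot be incident to three consecutive $3$-faces $[vab],[vbc],[vcd]$, since then $v,a,b,c,d,v$ would itself be a $5$-circuit.

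I would then assign the standard initial charges $\mu(x)=d(x)-4$ to each vertex and $\mu(f)=|f|-4$ to each face, which total $-8$ by Euler's formula. Under the working hypothesis and the absence of $5$-faces, the only objects with negative charge are the $3$-faces, each carrying $-1$. The discharging rules would let every face of length at least $6$ send $\tfrac{1}{3}$ across each edge shared with a $3$-face, and every vertex of degree at least $5$ distribute its surplus $d(x)-4$ proportionally among its incident $3$-faces. Showing that every vertex and every face finishes with nonnegative charge then contradicts the total $-8$, forcing $\delta(G)\leq 3$.

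The main obstacle will be \emph{clusters} of mutually adjacent $3$-faces, whose internal edges attract no charge from the face rule. For a cluster of $t$ triangles with $e_{\mathrm{int}}$ internal and $e_{\mathrm{ext}}$ external edges, the identity $e_{\mathrm{ext}}=3t-2e_{\mathrm{int}}$ already constrains how much charge can flow in from adjacent $\geq 6$-faces. I would combine this identity with the secondary structural restrictions (such as the three-consecutive-$3$-faces obstruction above) to show that any cluster which is not fed enough by its $\geq 6$-face neighbours must contain a vertex of degree $\geq 5$ whose surplus closes the remaining deficit. Balancing these two rules is the delicate bookkeeping step; once it is in place, the contradiction is immediate.
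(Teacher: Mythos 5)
First, note that the paper does not prove this lemma at all: it is quoted with a citation to Wang and Lih \cite{WeiFanWang_KWLih_2002}, so your proposal is being measured against the published discharging proof rather than anything in this manuscript. Against that standard, what you have written is a plan, not a proof, and the gap is not merely the deferred ``delicate bookkeeping'': the rules you state can be checked to fail on concrete configurations. Take a cluster consisting of two $3$-faces sharing an edge (a diamond $uvw$, $uvx$) in which all four vertices have degree $4$. Its total deficit is $2$, it has four external edges, and your face rule delivers at most $4\cdot\tfrac13=\tfrac43$; your vertex rule delivers nothing, since no incident vertex has degree at least $5$. Even if a degree-$5$ vertex were present, ``distributing its surplus $d(x)-4$ proportionally'' among, say, two incident triangles gives only $\tfrac12$ to this cluster, still short of the missing $\tfrac23$. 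So either such clusters must be shown impossible in a planar $5$-circuit-free graph with $\delta\geq 4$ (which does not follow from the structural observations you list), or the rules must be redesigned; as it stands the final charge is negative on these configurations and the contradiction with $-8$ never materializes. This is exactly the part of the argument that carries the content of the lemma, and it is absent.

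Two smaller points. Your claim that a $3$-face and a $4$-face cannot share an edge is justified by ``the vertices are distinct because $G$ is simple,'' which is not a valid reason: simplicity does not prevent the third vertex of the triangle from coinciding with a vertex of the $4$-face. The correct argument is that two distinct faces sharing two consecutive edges $uv,vw$ would force $d(v)=2$, contradicting $\delta(G)\geq 4$; i.e.\ adjacency here must be shown to be \emph{normal} adjacency, in the spirit of the analogous step in the proof of Theorem \ref{4_choosable}. Similarly, ``absence of $5$-faces'' needs the observation that under $\delta\geq 4$ every $5$-face boundary would be a $5$-circuit (a non-cyclic closed walk of length $5$ forces a vertex of degree $1$). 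These are repairable, but the discharging itself needs to be reworked before the proposal constitutes a proof.
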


\begin{lemma} [\cite{Fijavz_Juvan_Mohar_Skrekovski_2002}] \label{3-degenerate5}
Planar graph without 6-circuits are 3-degenerate.
\end{lemma}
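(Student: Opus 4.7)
The plan is to prove the contrapositive of $3$-degeneracy: it suffices to show that every planar graph without $6$-circuits contains a vertex of degree at most $3$, since any subgraph of such a graph is again planar and free of $6$-circuits. So assume for contradiction that $G$ is a plane graph with no $6$-circuit in which every vertex has degree at least $4$. I would apply Euler's formula in weighted form, assigning each vertex $v$ the initial charge $\mu(v)=d(v)-4$ and each face $f$ the charge $\mu(f)=\ell(f)-4$, where $\ell(f)$ denotes the length of the facial walk. Since $\sum_v d(v)=\sum_f \ell(f)=2|E(G)|$, Euler's identity gives $\sum_v\mu(v)+\sum_f\mu(f)=-8$.

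Under the minimum-degree hypothesis, every vertex carries non-negative charge, so the deficit of $-8$ is borne entirely by the $3$-faces, each contributing $-1$. The $4$-faces contribute $0$, the $5$-faces contribute $+1$, and $k$-faces with $k\geq 7$ contribute $k-4\geq 3$; the $6$-faces are absent by hypothesis. The strategy is to design discharging rules that transfer positive charge from the $5^+$-faces onto adjacent $3$-faces until every element has non-negative charge, contradicting the total of $-8$. A natural first rule is: every $5^+$-face sends charge $1/3$ across each edge on which it borders a $3$-face. Under this rule alone, a $3$-face whose three neighbouring faces all have length at least $5$ ends up with charge exactly $0$, and all $5^+$-faces retain non-negative charge.

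The substance of the argument is to handle the $3$-faces that share an edge with another $3$-face or with a $4$-face. Here the no-$6$-circuit hypothesis enters as the key structural restriction: configurations such as two triangles sharing an edge together with a short face nearby typically yield a closed walk of length $6$, which in a simple plane graph collapses to a $6$-circuit. A case analysis on the triple of neighbouring faces of each $3$-face aims to show that having more than one short neighbour forces a forbidden $6$-circuit, and that the remaining $5^+$-neighbours can spare additional charge through vertex-based secondary rules (for instance, sending an extra fraction across a shared vertex) to compensate for the edges along which no charge arrives. The main obstacle is precisely this local case analysis, together with the verification that after all rules every vertex and every face ends up with non-negative charge; this is essentially the discharging scheme of Fijav\v{z}, Juvan, Mohar, and \v{S}krekovski, which we would follow.
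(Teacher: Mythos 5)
The paper does not prove this lemma at all: it is quoted directly from Fijav\v{z}, Juvan, Mohar and \v{S}krekovski \cite{Fijavz_Juvan_Mohar_Skrekovski_2002}, so the only ``proof'' in the paper is the citation. Your reduction is fine as far as it goes (subgraphs of planar graphs without 6-circuits are again planar without 6-circuits, so it suffices to find one $4^-$-vertex, and in fact a $3^-$-vertex, in every such graph), and the charge assignment $\mu(v)=d(v)-4$, $\mu(f)=\ell(f)-4$ with total $-8$ is a reasonable starting point. But what you have written is a plan, not a proof: the entire content of the lemma lies exactly in the part you defer. Your primary rule (each $5^+$-face sends $1/3$ across each edge shared with a $3$-face) only handles triangles all of whose neighbouring faces are $5^+$-faces, and the ``secondary rules'' and the case analysis for triangles adjacent to triangles or to $4$-faces are never specified; you acknowledge this and fall back on ``the discharging scheme of Fijav\v{z}, Juvan, Mohar, and \v{S}krekovski, which we would follow.'' That is the same move the paper makes, but presented as if it were an argument. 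Note also that the hard configurations are genuinely nontrivial here: two triangles sharing an edge give a $4$-circuit and a triangle sharing an edge with a $4$-face gives a $5$-circuit, both of which are allowed, so clusters of $3$- and $4$-faces are not immediately excluded by the no-$6$-circuit hypothesis, and ruling out the bad charge configurations requires the detailed local analysis you have skipped.

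Two smaller inaccuracies: ``the $6$-faces are absent by hypothesis'' is not literally true, since a facial walk of length $6$ need not be a $6$-circuit when the boundary is not a cycle (minimum degree $4$ does not force $2$-connectivity); this is harmless because such faces would carry charge $+2$, but it signals that face boundaries versus circuits must be handled with care throughout the case analysis. If your intention is simply to invoke the known result, do what the paper does and cite it; if you intend a self-contained proof, the discharging rules must be stated in full and every vertex and face verified nonnegative, which is precisely the missing core.
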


\begin{theorem} \label{4_choosable}
Let $(G,\sigma)$ be a signed planar graph. For all $k\in \{3,4,5,6\}$, if $G$ has no $k$-circuit, then $(G,\sigma)$ is 4-choosable.
\end{theorem}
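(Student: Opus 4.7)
The statement splits into four cases according to $k$, and three of them are immediate. For $k \in \{3,5,6\}$, Lemmas~\ref{3-degenerate3}, \ref{3-degenerate4} and \ref{3-degenerate5} say that the underlying planar graph $G$ is 3-degenerate, so Theorem~\ref{degenerate_choosability} with $d=4$ gives that $(G,\sigma)$ is 4-choosable. The signature plays no role in these three cases because degeneracy is purely a property of the underlying graph.

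The actual work lies in the case $k=4$, since a planar graph without 4-circuits can have minimum degree $4$ and hence fail to be 3-degenerate; Theorem~\ref{degenerate_choosability} is not applicable. My plan is to adapt the Thomassen-style induction used for the unsigned case by Lam et al.~\cite{Lam_1999}. Specifically, I would formulate a strengthened inductive hypothesis in the spirit of Theorem~\ref{5_choosable_extend}: for a plane graph $G$ without 4-circuits with outer boundary $C=[v_1\ldots v_p]$ and a signature $\sigma$, if $L$ is a list assignment with $L(v_1)=\{\alpha\}$, $L(v_2)=\{\beta\}$, $\alpha\neq \beta\sigma(v_1v_2)$, $|L(v)|\geq 3$ on the rest of $C$, and $|L(v)|\geq 4$ inside, then an $L$-coloring exists. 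The induction on $|V(G)|$ would proceed by identifying a reducible configuration (a short chord, a low-degree vertex adjacent to $C$, or one of the specific triangle configurations from the unsigned argument), removing the relevant vertex to obtain a smaller signed graph of the same type, applying the hypothesis, and then choosing a color for the removed vertex from the residual list.

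The main obstacle is to confirm that the reducible configurations and the discharging argument that produces one of them both carry over to the signed setting. Two observations keep this manageable. First, all structural ingredients (planarity, absence of 4-circuits, vertex degrees, face sizes) depend only on $G$, so the discharging is unaffected by $\sigma$. Second, in every color-counting step each already-colored neighbor $u$ of a reduced vertex $v$ forbids exactly one value, namely $\phi(u)\sigma(uv)$, in $L(v)$; thus all cardinality inequalities used in the unsigned argument transfer verbatim. The delicate point is when a reducible configuration forces colors around a triangle whose balance depends on $\sigma$: here I would use Proposition~\ref{switch} to switch at a suitable vertex subset so that the edges of the configuration become all-positive, reducing the local extension problem to the unsigned one and then switching back. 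I expect this sign-normalisation, together with the bookkeeping to ensure the lists on $C$ retain their required sizes after removal, to be the most technical part of the argument.
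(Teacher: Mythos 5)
Your handling of $k\in\{3,5,6\}$ is exactly the paper's: the three degeneracy lemmas plus Theorem~\ref{degenerate_choosability} with $d=4$, and you are right that the signature is irrelevant there. But for $k=4$ what you give is a plan, not a proof, and the plan defers precisely the content that constitutes the paper's argument. The paper does not use a Thomassen-style extension theorem with a precolored edge, $3$-lists on the outer cycle and $4$-lists inside; it takes a minimal counterexample $(G,\sigma)$ with a bad $4$-list-assignment, proves $\delta(G)\ge 4$ and two concrete reducible configurations (a $6$-circuit $[u_0\ldots u_5]$ with chord $u_0u_2$, $d(u_0)\le 5$ and the other vertices of degree $4$; and a $10$-circuit with chords $u_0u_8,u_2u_6,u_2u_7$, one $6$-vertex and nine $4$-vertices), and then runs a discharging argument with charges $3d(v)-10$ and $2d(f)-10$ and six rules built around ``bad'' vertices, bad $3$-faces and ``magic'' $5$-faces. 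Your proposal says ``identify a reducible configuration\dots and confirm that the reducible configurations and the discharging argument carry over'' --- that is the entire theorem, and nothing in your sketch establishes it. Moreover, your proposed inductive statement ($|L(v)|\ge 3$ on the outer cycle, $|L(v)|\ge 4$ inside, one precolored edge) is not something you can borrow from \cite{Lam_1999} (whose proof is itself a minimal-counterexample/discharging argument, not a boundary-precoloring induction), and the Thomassen induction step of deleting an outer vertex and removing two colors from the lists of its interior neighbours only balances when interior lists have size $5$; with size-$4$ lists the truncated interior lists drop to size $2$ and no argument is offered that the induction closes.

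The second genuine gap is your sign-normalisation device. Switching can make a prescribed set of edges positive only when that set is (contained in) a forest; the balance of every circuit is a switching invariant. The reducible configurations here contain circuits ($3$-, $5$-, $6$- and $10$-circuits), so you cannot in general ``switch so that the edges of the configuration become all-positive'' and quote the unsigned extension argument. This is exactly where the paper has to do genuinely signed work: in its Claim~3 it switches only a star at $u_2$ to make three edges positive, and is then forced into a case analysis in which an edge $u_0u_1$ is negative, two residual lists are of the form $\{\gamma,-\gamma\}$, and the colour $0$ plays a special role (a colour $\alpha$ with $\alpha=-\alpha$ has no analogue in the unsigned setting). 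Your ``switch and reduce to the unsigned case'' step would fail precisely on these unbalanced local circuits, so the most delicate part of the proof is not bookkeeping but a new colouring argument that your proposal does not supply.
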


\begin{proof}
For $k\in \{3,5,6\}$ we deduce the statement from Theorem \ref{degenerate_choosability}, together with Lemmas \ref{3-degenerate3}, \ref{3-degenerate4} and \ref{3-degenerate5}, respecvitely.
It remains to prove Theorem \ref{4_choosable} for the case $k=4$.

Suppose to the contrary that the statement is not true.
Let $(G,\sigma)$ be a counterexample of smallest order, and $L$ be a 4-list-assignment of $(G,\sigma)$ such that $(G,\sigma)$ has no $L$-coloring. Clearly, $G$ is connected by the minimality of $(G,\sigma)$.

\begin{claim} \label{minimal degree}
$\delta(G)\geq 4$.
\end{claim}
Let $u$ be a vertex of $G$ of minimal degree. Suppose to the contrary that $d(u)<4$.
Let $\sigma'$ and $L'$ be the restriction of $\sigma$ and $L$ to $G-u$, respectively.
By the minimality of $(G,\sigma)$, the signed graph $(G-u,\sigma')$ has an $L'$-coloring $c$.
Since every neighbor of $u$ forbids one color for $u$ no matter what the signature of the edge between them is, $L(u)$ still has a color left for coloring $u$.
Therefore, $c$ can be extended to an $L$-coloring of $(G,\sigma)$, a contradiction.

\begin{claim} \label{5face}
$G$ has no 6-circuit $C$ such that $C=[u_0\ldots u_5]$ and $u_0u_2\in E(G)$, and $d(u_0)\leq 5$ and all other vertices of $C$ are of degree 4.
\end{claim}
Suppose to the contrary that $G$ has such 6-circuit $C$.
Since $G$ has no 4-circuit, $u_0u_2$ is the only chord of $C$.
There always exists a subset $X$ of $V(C)$ such that all of the edges $u_0u_2, u_1u_2$ and $u_2u_3$ are positive after a switch at $X$.
Let $\sigma'$ and $L'$ be obtained from $\sigma$ and $L$ by a switch at $X$, respectively.
Proposition \ref{switch} implies that signed graph $(G,\sigma')$ has no $L'$-coloring.
Hence, $(G,\sigma')$ is also a minimal counterexample.
Let $\sigma_1$ and $L_1$ be the restriction of $\sigma'$ and $L'$ to $G-V(C)$, respectively.
It follows that $(G-V(C),\sigma_1)$ has an $L_1$-coloring $\phi$.

We obtain a contradiction by further extending $\phi$ to an $L'$-coloring of $(G,\sigma')$ as follows.
By the condition on the vertex degrees of $C$, there exists a list-assignment $L_2$ of $G[V(C)]$ such that $L_2(u)\subseteq L'(u)\setminus \{\phi(v)\sigma'(uv)\colon\ uv\in E(G)~and~v\notin V(C)\}$ for $u\in V(C)$, and $|L_2(u_2)|=3$ and $|L_2(u)|= 2$ for $u\in V(C)\setminus \{u_2\}$.
Let $L_2(u_2)=\{\alpha,\beta,\gamma\}$.
Suppose that $L_2(u_2)$ has a color, say $\alpha$, not appear in at least two of lists $L_2(u_0), L_2(u_1)$ and $L_2(u_3)$.
We color $u_2$ with $\alpha$, and then all other vertices of $C$ can be list-colored by $L_2$ in some order.
For example, if $\alpha$ does not appear in $L_2(u_0)$ and $L_2(u_1)$, then we color $V(C)$ in the order
$u_2,u_3,u_4, u_5,u_0,u_1$.
Hence, we may assume that $L_2(u_0)=\{\alpha,\gamma\}$, $L_2(u_1)=\{\alpha,\beta\}$ and $L_2(u_3)=\{\beta,\gamma\}$.
If $\beta \neq \gamma \sigma'(u_0u_1)$ , then color $u_0$ with $\gamma$, $u_1$ with $\beta$, and $u_2$ with $\alpha$, and the remaining vertices of $C$ can be list-colored by $L_2$ in the order $u_5,u_4,u_3$.
Hence, we may assume $\beta = \gamma \sigma'(u_0u_1)$. It follows that $\sigma'(u_0u_1)=-1$ and $\beta=-\gamma\neq0$.
If $\alpha \neq 0$, then color both $u_0$ and $u_1$ with $\alpha$, and the remaining vertices of $C$ can be list-colored by $L_2$ in the order $u_5,u_4,u_3,u_2$.
Hence, we may assume $\alpha = 0$.
Now color 0 is included in list $L_2(u_0)$ but no in list $L_2(u_3)$.
Thus there exists an integer $i$ in set $\{3,4,5\}$ such that $0\in L_2(u_{i+1})$ and $0\notin L_2(u_i)$ (index is added modular 6).
We color $u_{i+1}$ with color 0, and then the remaining vertices of $C$ can be list-colored by $L_2$ in cyclic order on $C$ ending at $u_i$.

\begin{claim} \label{6-vertex}
$G$ has no 10-circuit $C$ such that $C=[u_0\ldots u_9]$ and $u_0u_8, u_2u_6, u_2u_7 \in E(G)$, and vertex $u_2$ has degree 6 and all other vertices of $C$ have degree 4.
\end{claim}
Suppose to the contrary that $G$ has such a 10-circuit $C$.
Let $\sigma'$ and $L'$ be the restriction of $\sigma$ and $L$ to graph $G-V(C)$, respectively.
By the minimality of $(G,\sigma)$, signed graph $(G-V(C),\sigma')$ has an $L'$-coloring $\phi$.
A contradiction is obtained by further extending $\phi$ to an $L$-coloring of $(G,\sigma)$ as follows.
We shall list-color the vertices of $C$ by $L$ in the cyclic order $u_0,u_1,\ldots, u_9$.
For $i\in \{0,\ldots,9\}$, let $F_i=\{\phi(v)\sigma(u_iv)\colon\ u_iv\in E(G)~and~v\notin V(C)\}$.
Clearly, $F_i$ is the set of forbidden colors by the neighbors of $u_i$ not on $C$ to be assigned to vertex $u_i$.
Since $d(u_0)=d(u_9)=4$ and moreover, if there is any other chord of $C$ then the list $F_i$ will not become longer, it follows that $|F_0|\leq1$ and $|F_9|\leq2$.
Hence, we can let $\alpha$ and $\beta$ be two distinct colors from $L(u_9)\setminus F_9$, and let $\gamma\in L(u_0)\setminus (F_0\cup \{\alpha\sigma(u_0u_9),\beta\sigma(u_0u_9)\})$.
Color vertex $u_0$ with $\gamma$.
For $i\in\{1,\ldots,8\}$, vertex $u_i$ has at most 3 neighbors colored before $u_i$ in this color-assigning process and thus,
$L(u_i)$ still has a color available for $u_i$.
Denote by $\zeta$ the color vertex $u_8$ receives.
We complete the extending of $\phi$ by assigning a color from $\{\alpha,\beta\}\setminus \{\zeta\sigma(u_8u_9)\}$ to $u_9$.

\subsection*{Discharging}
Consider an embedding of $G$ into the Euclidean plane.
Let $G$ denote the resulting plane graph.
We say two faces are \emph{adjacent} if they share an edge.
Two adjacent faces are \emph{normally adjacent} if they share an edge $xy$ and no vertex other than $x$ and $y$.
Since $G$ is a simple graph, the boundary of every 3-face or 5-face is a circuit.
Since $G$ has no 4-circuits, we can deduce that if a 3-face and a 5-face are adjacent, then they are normally adjacent.
A vertex is \emph{bad} if it is of degree 4 and incident with two nonadjacent 3-faces.
A \emph{bad 3-face} is a 3-face containing three bad vertices.
A 5-face $f$ is \emph{magic} if it is adjacent to five 3-faces, and if all the vertices of these six faces have degree 4 except one vertex of $f$.

We shall obtain a contradiction by applying discharging method.
Let $V=V(G)$, $E=E(G)$, and $F$ be the set of faces of $G$.
Denote by $d(f)$ the size of a face $f$ of $G$.
Give initial charge $ch(x)$ to each element $x$ of $V\cup F$, where $ch(v)=3d(v)-10$ for $v\in V$, and $ch(f)=2d(f)-10$ for $f\in F$.
Discharge the elements of $V\cup F$ according to the following rules:

\begin{enumerate}[R1.]
  \itemsep=0cm
  \item Every vertex $u$ sends each incident 3-face charge 1 if $u$ is a bad vertex, and charge 2 otherwise.
  \item Every 5-vertex sends $\frac{1}{3}$ to each incident 5-face.
  \item Every 6-vertex sends each incident 5-face $f$ charge 1 if $f$ is magic, charge $\frac{2}{3}$ if $f$ is not magic but contains four 4-vertices, charge $\frac{1}{3}$ if $f$ contains at most three 4-vertices.
  \item Every $7^+$-vertex sends 1 to each incident 5-face.
  \item Every 3-face sends $\frac{1}{3}$ to each adjacent 5-face if this 3-face contains at most one bad vertex.
  \item Every $5^+$-face sends $\frac{k}{3}$ to each adjacent bad 3-face, where $k$ is the number of common edges between them.
\end{enumerate}

Let $ch^*(x)$ denote the final charge of each element $x$ of $V\cup F$ when the discharging process is over.
On one hand, by Euler's formula we deduce $\sum\limits_{x\in V\cup F}ch(x)=-20.$
Since the sum of charge over all elements of $V\cup F$ is unchanged, we have $\sum\limits_{x\in V\cup F}ch^*(x)=-20.$ On the other hand, we show that $ch^*(x)\geq 0$ for $x\in V\cup F$. Hence, this obvious contradiction completes the proof of Theorem \ref{4_choosable}.

It remains to show that $ch^*(x)\geq 0$ for $x\in V\cup F$.

\begin{claim} \label{claim_vertex}
If $v\in V$, then $ch^*(v)\geq0$.
\end{claim}

Let $p$ be the number of 3-faces that contains $v$.
Since $G$ has no 4-circuit, $p\leq \lfloor \frac{d(v)}{2}\rfloor$.
Moreover, $d(v)\geq4$ by Claim \ref{minimal degree}.

Suppose $d(v)=4$. We have $p\leq 2$. If $p=2$, then $v$ is a bad vertex and thus we have $ch^*(v)= 3d(v)-10-p=0$ by R1; otherwise, we have $ch^*(v)= 3d(v)-10-2p\geq 0$ by R1 again.

If $d(v)=5$, then $p\leq 2$ and thus by R1 and R2, we have $ch^*(v)\geq 3d(v)-10-2p-\frac{1}{3}(5-p)\geq0$.

Suppose that $d(v)=6$. Thus $p\leq 3$. By R1 and R3, if $p\leq 2$ then we have $ch^*(v)\geq 3d(v)-10-2p-(6-p)\geq 0$, and if $v$ is incident with no magic 5-face then we have $ch^*(v)\geq 3d(v)-10-2p-\frac{2}{3}(6-p)\geq 0$. Hence, we may assume that $p=3$ and that $v$ is incident with a magic 5-face $f$. For any other $5^+$-face $f'$ containing $v$ than $f$, Claim \ref{6-vertex} implies that if $f'$ has size 5 then it contains at most three 4-vertices, and thus $v$ sends at most $\frac{1}{3}$ to $f'$ by R3. Hence, we have $ch^*(v)\geq 3d(v)-10-2\times 3-1-\frac{1}{3}\times 2> 0$.

It remains to suppose $d(v)\geq7$. By R1 and R4, we have $ch^*(v)\geq 3d(v)-10-2p-(d(v)-p)\geq 2d(v)-10-\lfloor \frac{d(v)}{2}\rfloor >0$.
\begin{claim}
If $f\in F$, then $ch^*(f)\geq0$.
\end{claim}
Suppose $d(f)=3.$ Recall that in this case the boundary of $f$ is a circuit. We have $ch^*(f)\geq 2d(f)-10+2+2+1-3\times \frac{1}{3}=0$ by R1 and R5 when $f$ has at most one bad vertex, and $ch^*(f)\geq 2d(f)-10+2+1+1=0$ by R1 when $f$ has precisely two bad vertices. It remains to assume that $f$ has precisely three bad vertices, that is, $f$ is a bad 3-face. In this case, $f$ receives charge 1 in total from adjacent faces by R6, and charge 3 in total from incident vertices by R1. Hence, we have $ch^*(f)\geq 2d(f)-10+1+3=0$.

Suppose $d(f)=5$. Recall in this case that the boundary of $f$ is a circuit and that if $f$ is adjacent to a 3-face then they are normally adjacent.
Let $q$ be the number of bad 3-faces adjacent to $f$.
Clearly, $f$ sends charge only to adjacent bad 3-faces by R6, and possibly receives charge from incident $5^+$-vertices and adjacent 3-faces by rules from R2 to R5.
Hence, we have $ch^*(f)\geq 2d(f)-10=0$ when $q=0$.
Claim \ref{5face} implies that $q\leq3$ and that $f$ contains a $5^+$-vertex $u$, which sends at least $\frac{1}{3}$ to $f$.
Hence, we have $ch^*(f)\geq 2d(f)-10-\frac{1}{3}+\frac{1}{3}=0$ when $q=1$.
First suppose $q=2$. If $f$ has a $5^+$-vertices different from $u$, then we are done by $ch^*(f)\geq 2d(f)-10-2\times \frac{1}{3}+2\times \frac{1}{3}=0$. Hence, we may assume that $f$ contains four 4-vertices. It follows that if $d(u)\geq 6$, then $f$ receives at least $\frac{2}{3}$ from $v$ by R3 or R4 and thus we are done. Hence, we may assume that $d(u)=5$.
Through the drawing of 3-faces adjacent to $f$, we can assume $u$ is incident with a 3-face $[uvw]$ that is adjacent to $f$ on edge $uv$.
Claim \ref{5face} implies that $d(w)\geq 5$. Hence, $f$ receives $\frac{1}{3}$ from face $[uvw]$ by R5, and thus we are done.
Let us next suppose $q=3$. We may assume $f=[uv'w'x'y']$ such that $v'w'$, $w'x'$ and $x'y'$ are the three common edges between $f$ and bad 3-faces.
Since both vertices $v'$ and $y'$ are bad, edges $uv'$ and $uy'$ are contained in 3-faces $[uv't']$ and $[uy'z']$, respectively.
If $d(u)=5,$ then Claim \ref{5face} implies that $d(t'), d(z')\geq 5$, and thus $f$ receives $\frac{1}{3}$ from each of faces $[uv't']$ and $[uy'z']$ by R5, we are done.
If $d(u)\geq 7$, then $f$ receives 1 from $u$ and thus we are done.
Hence, we may assume that $d(u)=6$.
If both $t'$ and $z'$ has degree 4, that is, $f$ is a magic 5-face, then $f$ receives 1 from $u$ by R3;
otherwise, $f$ receives $\frac{2}{3}$ from $u$ and $\frac{1}{3}$ from at least one of faces $[uv't']$ and $[uy'z']$ by R3 again.
We are done in both cases.

It remains to suppose $d(f)\geq 6$. Remind that $f$ has no charge moving in or out except that it sends $\frac{1}{3}d(f)$ in total to adjacent bad 3-faces by R6.
Hence, we have $ch^*(f)\geq 2\times d(f)-10-\frac{1}{3}d(f)\geq 0$.

The proof of Theorem \ref{4_choosable} is completed.
\end{proof}

\section{3-choosability} \label{3c}
In 1995, Thomassen \cite{Thomassen_1995} proved that every planar graph of girth at least 5 is 3-choosable. And then in 2003, he \cite{Thomassen_2003} gave a shorter proof of this result.
We find out that the argument used in \cite{Thomassen_2003} also works for signed graphs.
Hence the following statement is true.

\begin{theorem} \label{3_choosable}
Every signed planar graph with neither 3-circuit nor 4-circuit is 3-choosable.
\end{theorem}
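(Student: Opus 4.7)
The plan is to follow Thomassen's 2003 short argument for $3$-choosability of planar graphs of girth at least $5$ essentially verbatim, verifying at each reduction that the signed analogue still goes through. As in the unsigned case, the substantive work is done by a strengthened statement proved by induction on $|V(G)|$; Theorem~\ref{3_choosable} itself then follows by instantiating it with trivial boundary data.

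The strengthened statement I would prove is a signed analogue of Thomassen's lemma, in the spirit of Theorem~\ref{5_choosable_extend}: let $(G,\sigma)$ be a signed plane graph whose underlying graph has girth at least $5$, let $C=[v_1\ldots v_p]$ bound the unbounded face, and let $L$ be a list-assignment with $L(v_1)=\{\alpha\}$, $L(v_2)=\{\beta\}$ satisfying $\alpha\neq\beta\,\sigma(v_1v_2)$, together with $|L(v)|\geq 3$ for every other vertex of $C$ and for every interior vertex (with the precise list-size thresholds on a few outer-cycle vertices adjusted to match Thomassen's exact formulation). Then this partial coloring extends to an $L$-coloring of $(G,\sigma)$. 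Applying the statement with an arbitrary precoloring of any edge yields Theorem~\ref{3_choosable}; the base case where $G$ is a short cycle is handled directly.

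The induction step uses the same structural reductions as in the unsigned setting. If $C$ has a chord, split along it and apply the hypothesis to each side in turn, using the colors already fixed on the shared vertices as the new precoloring, consistency with the sign of the chord being automatic because those colors were forced by the previous piece. If $C$ has no chord, locate the configuration Thomassen isolates near $v_2$, namely a low-degree boundary neighbour $u$ of $v_2$ with a controlled interior neighbourhood, and delete $u$ after trimming each neighbour $w$'s list to $L(w)\setminus\{c(u)\,\sigma(uw)\}$ for a carefully chosen colour $c(u)$. The crucial bookkeeping is that a coloured neighbour $u$ of $w$ joined by an edge $e=uw$ forbids exactly one colour at $w$, namely $c(u)\,\sigma(e)$, regardless of the sign of $e$; every list-size inequality Thomassen relies on therefore carries over unchanged to the signed setting.

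The step I expect to demand the most care is the one where Thomassen identifies two non-adjacent vertices $x,y$ on $C$ (or collapses a short interior path) into a single vertex with list $L(x)\cap L(y)$, since such a merge is well-defined in a signed graph only if the signs of the affected edges are compatible: at any common neighbour $w$ of $x$ and $y$ one needs $\sigma(xw)=\sigma(yw)$, otherwise a single colour at the merged vertex imposes conflicting constraints. This is arranged by a preliminary switch at a suitable subset $X\subseteq V(G)$ chosen to align the relevant signs; by Proposition~\ref{switch}, such a switch preserves choosability and changes the extension problem only by negating the lists on $X$. The girth-at-least-$5$ hypothesis guarantees that the identified region is tree-like, so the required alignment can always be realised by a single switch. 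Modulo this sign-alignment bookkeeping, the remainder of Thomassen 2003's argument is entirely sign-insensitive and transfers without further modification.
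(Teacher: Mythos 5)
Your overall approach is the same as the paper's: the appendix proves a signed analogue of Thomassen's 2003 extension theorem (Theorem~\ref{Thomassen}) by induction, and the only sign-sensitive point is precisely the one you identify, namely that a coloured neighbour $u$ of $w$ forbids exactly the single colour $c(u)\sigma(uw)$ (the paper's ``product colour''), so every list-size count in Thomassen's argument carries over verbatim. Two corrections to your plan, however. First, the inner statement cannot be the two-precoloured-vertex analogue of Theorem~\ref{5_choosable_extend} with 3-lists on the rest of the outer cycle: the reductions (colouring and deleting outer vertices, and colouring interior vertices lying on short paths between outer vertices) leave boundary vertices with only two available colours and create precoloured segments longer than an edge, so the statement that closes the induction must allow a precoloured path of up to six outer vertices, lists of size 2 on the remaining outer vertices, and the hypothesis that vertices with lists of size at most two are pairwise non-adjacent outside $P$; this is exactly Thomassen's formulation and the one used in the appendix. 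Second, the step you single out as the crux --- identifying two non-adjacent vertices onto a common list $L(x)\cap L(y)$, preceded by a switch to align signs --- does not occur in Thomassen's girth-5 argument at all (you are likely conflating it with his derivation of Gr\"otzsch's theorem, where vertices of a 4-face are identified); the girth-5 proof only colours, deletes, and splits, so no switching and no sign alignment is needed anywhere in the paper's proof, and your unsupported claim that a single switch always realises the required alignment never has to be examined. With these two points repaired, your plan coincides with the paper's appendix proof.
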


For the sake of completeness, the proof is given in the appendix.

\begin{theorem}
There exists a signed planar graph $(G,\sigma)$ such that $G$ has girth 4 and $(G,\sigma)$ is not 3-choosable but $G$ is 3-choosable.
\end{theorem}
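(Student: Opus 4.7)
The plan is to adapt the non-4-choosability construction from the previous subsection, shifting all parameters down by one. The engine is again the observation that an unbalanced cycle in which every vertex carries a negation-closed 2-list of the form $\{\alpha,-\alpha\}$ admits no proper signed coloring, while that same cycle, taken unsigned, is simply an even (hence 2-choosable) cycle. At girth 4 the smallest instance of this asymmetry is the unbalanced 4-cycle, and this is the only device I intend to use.

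First I would construct a triangle-free planar base graph $G_0$, equipped with the constant 3-list $L_0(v)=\{1,2,3\}$, that is 3-choosable but whose $L_0$-colorings are combinatorially rigid: among a family $F_1,\dots,F_N$ of distinguished 4-faces, every $L_0$-coloring of $G_0$ forces some prescribed pattern of three distinct colors on the corners of at least one $F_i$. A natural candidate, parallel to the iterated-claw construction on $K_4$ in the preceding proof, is to start from a small triangle-free planar 3-choosable graph (say $K_{3,3}$ or $K_{4,4}$) and repeatedly insert bipartite ``pseudo-claws'' into its 4-faces; 3-choosability follows by induction on the layers of insertion, and the rigidity statement by a finite case analysis in the same spirit as the one implicit in the analysis of $G_3$ in the $k=4$ proof.

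Second, into each distinguished 4-face $F_i$ I would insert a planar triangle-free gadget $H_i$, itself containing an auxiliary vertex $D_i$ and two signed 4-cycles $Q_i^{(1)}, Q_i^{(2)}$ meeting the rest of $H_i$ only through $D_i$. Drawing the interior lists from a palette of the form $\{1,2,3,6,-6,\alpha,-\alpha\}$, one tunes them so that, after the forced pattern on the boundary of $F_i$ is imposed, the residual list of $D_i$ has exactly two elements, each of which restricts one of $Q_i^{(1)}, Q_i^{(2)}$ to negation-closed 2-lists on all four of its vertices. Signing both 4-cycles to be unbalanced then kills both choices and shows that $(G,\sigma)$ has no $L$-coloring. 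The unsigned part is then handled as in the previous construction: given any 3-list $L'$, extract an $L'$-coloring of $G_0$ from its 3-choosability and extend gadget-by-gadget, using the fact that each $Q_i^{(j)}$ is a 2-choosable 4-cycle once signs are forgotten.

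The main obstacle is the combinatorial engineering. In the $k=4$ analogue one could pair a balanced 3-cycle with an unbalanced 4-cycle as the two failing sub-configurations, but here the 3-cycle is forbidden by the girth condition, so both failing cycles must be unbalanced 4-cycles. Designing $H_i$ so that it is planar and triangle-free inside $F_i$, admits a list assignment whose residual-after-interface is negation-closed on the vertices of two independently activated unbalanced 4-cycles, and still allows a valid unsigned extension, is the delicate point. Simultaneously building a base $G_0$ that is both 3-choosable and rigid enough to feed the gadget on every $L_0$-coloring is the second delicate point; the remaining verifications are routine adaptations of the $k=4$ arguments.
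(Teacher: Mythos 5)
Your underlying engine is the right one---an unbalanced $4$-cycle whose vertices end up with negation-closed lists $\{\alpha,-\alpha\}$ admits no proper signed coloring, yet is an ordinary 2-choosable even cycle once signs are forgotten---and this is indeed what the paper exploits. But what you have written is a program, not a proof: the two objects on which it depends are left unconstructed, and they are exactly where the content of the theorem would lie. You need (i) a triangle-free planar 3-choosable base $G_0$ with the rigidity property that every coloring from the constant lists $\{1,2,3\}$ forces a prescribed pattern of three distinct colors on some distinguished 4-face, and (ii) a planar, triangle-free gadget $H_i$ whose interface with that pattern leaves a 2-element residual list on $D_i$, each element activating one of two unbalanced 4-cycles with negation-closed residual lists. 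You flag both as ``delicate points'' and give no construction or verification for either; the proposed base (iterated ``bipartite pseudo-claws'' inserted into faces of $K_{3,3}$ or $K_{4,4}$) is not defined precisely, and it is not clear that the rigidity statement you need holds for any such graph, nor that a two-branch gadget of the required kind exists under the girth-4 constraint. As it stands the argument cannot be checked, so there is a genuine gap.

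It is also worth knowing that the difficulty you are fighting is avoidable. The paper builds no rigid base at all: it takes nine copies of the cube-like graph $T$ (outer 4-cycle $[ABCD]$, inner 4-cycle $[MNPQ]$, spokes $AM,BN,CP,DQ$), identifies all the $A_i$ into a single hub $A'$ and all the $C_i$ into a hub $C'$, makes each inner 4-cycle unbalanced by one negative edge, and chooses the lists so that the nine copies enumerate the nine possible color pairs $(c(A'),c(C'))$. In the copy matching the actual pair, $B$ and $D$ are forced to the color $6$, after which every vertex of the inner 4-cycle retains exactly $\{7,-7\}$, and unbalancedness kills the coloring; unsigned 3-choosability is immediate from the 2-choosability of even cycles. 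So only one unbalanced 4-cycle per copy is needed, and your ``pigeonhole over all colorings of a rigid base'' is replaced by brute-force enumeration of the two hub colors. To complete your write-up you should either carry out your two constructions in full detail or simplify along these lines.
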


\begin{proof}
Let $T$ be a plane graph consisting of two circuits $[ABCD]$ and $[MNPQ]$ of length 4 and four other edges $AM, BN, CP$ and $DQ$, as shown in Figure \ref{fig3}. Take nine copies $T_0,\ldots,T_8$ of $T$, and identify $A_0,\ldots,A_8$ into a vertex $A'$ and $C_0,\ldots,C_8$ into a vertex $C'$. Let $G$ be the resulting graph. Clearly, $G$ is planar and has girth 4.
\begin{figure}[h]
  \centering
  \includegraphics[width=4cm]{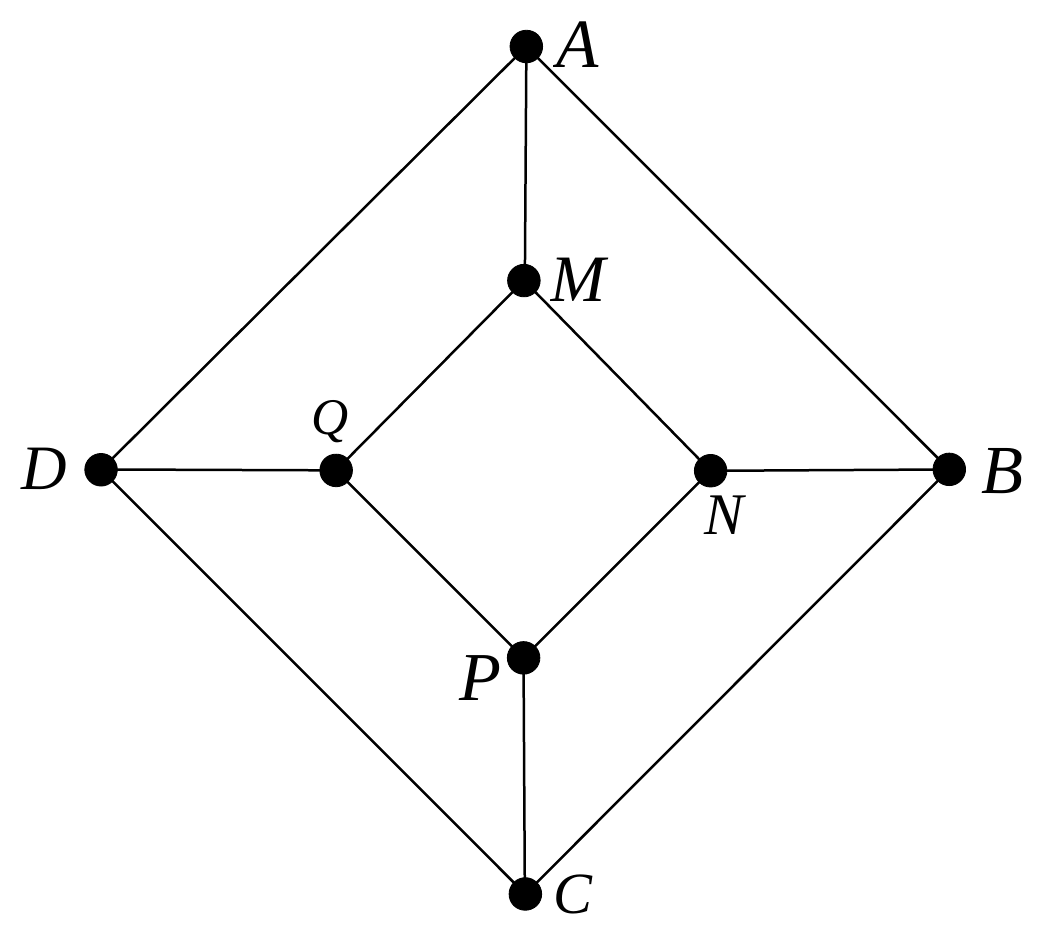}\\
  \caption{graph $T$}\label{fig3}
\end{figure}

Define a signature $\sigma$ of $G$ as: $\sigma(e)=-1$ for $e\in \{M_iN_i: i\in \{0,\ldots,8\}\}$, and $\sigma(e)=1$ for $e\in E(G)\setminus \{M_iN_i: i\in \{0,\ldots,8\}\}$.

For $i\in \{0,1,2\}$, let $a_i=i$ and $b_i=i+3.$ Define a 3-list-assignment $L$ of $G$ as follows: $L(A')=\{a_1, a_2, a_3\}, L(C')=\{b_1, b_2, b_3\}$; for $i,j\in \{0,1,2\}$, let $L(B_{3i+j})=L(D_{3i+j})=\{a_i,b_j,6\}, L(N_{3i+j})=L(Q_{3i+j})=\{6,7,-7\}, L(M_{3i+j})=\{a_i,7,-7\}$ and $L(P_{3i+j})=\{b_j,7,-7\}$.

We claim that signed graph $(G,\sigma)$ has no $L$-coloring. Suppose to the contrary that $c$ is an $L$-coloring of $(G,\sigma)$.
Let $c(A')=a_p$ and $c(C')=b_q$. Consider subgraph $T_{3p+q}$. It follows that $c(B_{3p+q})=c(D_{3p+q})=6$.
Furthermore, the circuit $[M_{3p+q}N_{3p+q}P_{3p+q}Q_{3p+q}]$ is unbalanced and thus not 2-choosable.
Hence, $T_{3p+q}$ is not properly colored in $c$, a contradiction. This proves that $(G,\sigma)$ has no $L$-coloring and therefore, $(G,\sigma)$ is not 3-choosable.

We claim that graph $G$ is 3-choosable. For any 3-list-assignment of $G$, choose any color for vertices $A'$ and $C'$ from their color lists, respectively.
Consider each subgraph $T_i~(i\in \{0,\ldots,8\})$. Both vertices $B_i$ and $D_i$ can be list colored. The 2-choosability of circuit $[M_iN_iP_iQ_i]$ yields a list coloring of $T_i$ and hence a list coloring of $G$. This proves that $G$ is 3-choosable.
\end{proof}

\section{Appendix}

\begin{theorem}\label{Thomassen}
Let $(G,\sigma)$ be a signed plane graph of girth at least 5, and $D$ be the outer face boundary of $G$. Let $P$ be a path or circuit of $G$ such that $|V(P)|\leq 6$ and $V(P)\subseteq V(D)$,
and $\sigma_p$ be the restriction of $\sigma$ to $P$.
Assume that $(P,\sigma_p)$ has a 3-coloring $c$.
Let $L$ be a list-assignment of $G$ such that $L(v)=\{c(v)\}$ if $v\in V(P)$, $|L(v)|\geq 2$ if $v\in V(D)\setminus V(P)$, and $|L(v)|\geq 3$ if $v\in V(G)\setminus V(D)$.
Assume furthermore that there is no edge joining vertices whose lists have at most two colors except for the edges in $P$. Then $c$ can be extended to an $L$-coloring of $(G,\sigma)$.
\end{theorem}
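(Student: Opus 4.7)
I would prove the theorem by induction on $|V(G)|$, mirroring the short argument of Thomassen in \cite{Thomassen_2003} and using the signature-switching tool (Proposition \ref{switch}) to reduce the local analysis to the unsigned case. The base step $G = P$ is immediate, since $c$ itself is the required $L$-coloring. For the inductive step I first handle the standard reducible configurations. If $G$ is not $2$-connected, split at a cut vertex, apply induction to the piece containing $P$, and then extend through the cut vertex, which serves as a trivial precolored ``path'' of at most $6$ vertices in the other piece. If $D$ has a chord $xy$, split $G$ along $xy$ into $G_1 \supseteq P$ and $G_2$, apply induction to $G_1$ to color $x$ and $y$, and then apply induction to $G_2$ with the precolored path $xy$. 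Hence I may assume $G$ is $2$-connected and $D$ is a chordless cycle strictly containing $V(P)$.

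Next I pick a vertex $v \in V(D) \setminus V(P)$ adjacent on $D$ to a vertex of $P$ (or, if $V(P) = \emptyset$, any vertex of $D$, giving the Gr\"otzsch-style base). Let the interior neighbors of $v$ be $x_1, \ldots, x_t$. By Proposition \ref{switch}, I may switch at $\{v\}$ so that every edge incident to $v$ becomes positive; the corresponding list transformation preserves all hypotheses of the theorem, and the constraint at $v$ becomes the unsigned constraint $c(v) \neq c(u)$ for every neighbor $u$. I then apply Thomassen's color-removal trick: pick two colors $\alpha, \beta \in L(v)$ avoiding the (at most two) values forbidden by neighbors of $v$ lying in $P$, remove $\{\alpha, \beta\}$ from every $L(x_i)$ to form a list-assignment $L''$ on $G - v$, enlarge the precolored path to $P' = P + v$ (still of size at most $6$ in each Thomassen sub-case), and invoke the induction hypothesis on $(G - v, \sigma')$. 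A final switch back via Proposition \ref{switch} converts the resulting $L''$-coloring into an $L$-coloring of $(G, \sigma)$.

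The hard part, identical in content to Thomassen's proof, is a case analysis on the position of $v$ relative to $P$ (whether $P$ is a path or circuit, whether $v$ is adjacent on $D$ to an endvertex of $P$, and how $V(P)$ grows to $V(P')$) that guarantees three invariants after deletion and list-reduction: (i) $P'$ remains a path or circuit of $G - v$ with $|V(P')| \leq 6$ and a valid signed $3$-coloring; (ii) the new $2$-list vertices $x_1, \ldots, x_t$ are pairwise non-adjacent and not adjacent to any other $2$-list vertex outside $P'$, which is forced by the girth-at-least-$5$ and chordlessness hypotheses; (iii) every list retains its prescribed minimum size. The signed setting introduces no new difficulty: each neighbor of $v$ still forbids a single color per edge, so the cardinality counts match Thomassen's, and the local switching absorbs the signs.
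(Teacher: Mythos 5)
Your overall frame (induction on $|V(G)|$, splitting at cutvertices and at chords of $D$) matches the paper, but the engine of your inductive step is the wrong one, and it genuinely fails. You transplant the two-reserved-colors deletion trick from Thomassen's 5-choosability argument (the proof of Theorem \ref{5_choosable_extend}): reserve $\alpha,\beta\in L(v)$ and delete them from the lists of the interior neighbors of $v$. That trick needs $v$ to have at least three usable colors after excluding the constraint from its precolored neighbor, and it needs the interior neighbors to survive the loss of two colors. Here neither holds: a vertex $v\in V(D)\setminus V(P)$ is only guaranteed $|L(v)|\geq 2$, so after excluding the product color forced by its neighbor in $P$ you cannot in general reserve two colors at all; and the interior neighbors of $v$ have 3-lists, so deleting two colors leaves them with 1-lists, although after removing $v$ they lie on the new outer boundary and the induction hypothesis requires at least 2 colors there (and forbids edges between small-list vertices outside $P$). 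Your step is also internally inconsistent: you ``enlarge the precolored path to $P'=P+v$'' while simultaneously applying induction to $G-v$, and if $|V(P)|=6$ already, $P+v$ would violate the bound $|V(P')|\leq 6$ in any case. Finally, the part you defer as ``a case analysis identical in content to Thomassen's proof'' is precisely the substance of the proof, not a routine verification.

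The paper's argument (following Thomassen's 2003 proof, with signs handled simply by forbidding the product colors $c(u)\sigma(uv)$, so no switching is needed) works differently: it never reserves two colors at an uncolored vertex. After the structural Claims \ref{2-connected}--\ref{empty cycle} (in particular: $P$ is a path with $q+3\leq k$, $D$ is chordless, there are no connections $v_iuv_j$ or $v_iuwv_j$ through the interior involving small lists except two precisely described exceptions, and every $6^-$-circuit other than $D$ has empty interior), it actually \emph{colors} a short block of boundary vertices just beyond $P$ --- which vertices, among $v_{q+1},v_{q+2},v_{q+3}$ (plus possibly the exceptional interior vertices $u,u',w,z$), is dictated by whether $|L(v_{q+2})|$ and $|L(v_{q+4})|$ are at least 3 --- deletes them, and removes only one product color per edge from each neighbor, so interior 3-lists drop to 2-lists at worst. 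The structural claims are then exactly what is needed to check that the colored vertices split $G$ into parts in which the precolored vertices induce a path on at most 6 vertices and no two small-list vertices are adjacent outside it. None of this machinery appears in your proposal, so as written it does not yield a proof of Theorem \ref{Thomassen}.
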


\begin{proof}
We prove Theorem \ref{Thomassen} by induction on the number of vertices. We assume that $(G,\sigma)$ is a smallest counterexample and shall get a contradiction.

\begin{claim}\label{2-connected}
$G$ is 2-connected and hence, $D$ is a circuit.
\end{claim}

We may assume that $G$ is connected, since otherwise we apply the induction hypothesis to every connected component of $G$. Similarly, $G$ has no cutvertex in $P$.
Moreover, $G$ has no cutvertex at all.
Suppose to the contrary that $u$ is a cutvertex contained in an endblock $B$ disjoint from $P$. We first apply the induction hypothesis to $G-(B-u)$. If $B$ has vertices with only two available colors joined to $u$, then we color each such vertex. These colored vertices of $B$ together with the edges joining them to $u$ divide $B$ into parts each of which has at most three colored vertices inducing a path. Now we apply the induction hypothesis to each of those parts. This contradiction proves Claim \ref{2-connected}.

\begin{claim}\label{P_nonchord}
For $e\in E(P)$, $e$ is not a chord of $D$.
\end{claim}

If some edge $e$ of $P$ is a chord of $G$, then $e$ divides $G$ into two parts, and we apply the induction hypothesis to each of those two parts. This contradiction proves Claim \ref{P_nonchord}.

By Claims \ref{2-connected} and \ref{P_nonchord}, we may choose the notion such that $D=[v_1\ldots v_k]$ and $P=v_1\ldots v_q$.

Let $X$ be a set of colored vertices of $G$.
To save writing we just say ``delete the product colors of $X$ from $G$'' instead of
``for $v\in V(G)\setminus X$, delete all of the colors in $\{c(u)\sigma(uv)\colon\ u\in X~and~uv\in E(G)\}$ from the list of $v$''.

\begin{claim}\label {path}
$P$ is a path, and $q+3\leq k$.
\end{claim}

If $P=D$, then we delete any vertex from $D$, and delete the product color of that vertex from $G$. If $P\neq D$ and $k< q+3$, then we color the vertices of $D$ not in $P$, we delete them together with their product colors from $G$.

Now we apply the induction hypothesis to the resulting graph $G'$, if possible.
As $G$ has grith at least 5, the vertices with precisely two available colors are independent.
For the same reason, such a vertex cannot be joined to two vertices of $P$.
However, such a vertex may be joined to precisely one vertex of $P$.
We then color it. Now the colored vertices of $G'$ divide $G'$ into parts each of which has at most 6 precolored vertices inducing a path. We then apply induction hypothesis to each of those parts. This contradiction proves Claim \ref {path}.

\begin{claim}\label {chord}
$D$ has no chord.
\end{claim}
Suppose to the contrary that $xy$ is a chord of $D$. Then $xy$ divides $G$ into two graphs $G_1, G_2$, say. We may choose the notation such that $G_2$ has no more vertices of $P$ than $G_1$ has, and subject to that condition, $|V(G_2)|$ is minimum. We apply the induction hypothesis first to $G_1$. In particular, $x$ and $y$ receive a color. The minimality of $G_2$ implies that the outer cycle of $G_2$ is chordless. So $G_2$ has at most two vertices which have only two available colors and which are joined to one of $x$ and $y$. We color any such vertex, and then we apply the induction hypothesis to $G_2$. This contradiction proves Claim \ref {chord}.

\begin{claim}\label{2-path}
$G$ has no path of the form $v_iuv_j$ where $u$ lies inside $D$, except possibly when $q=6$ and the path is of the form $v_4uv_7$ or $v_3uv_k$. In particular, $u$ has only two neighbors on $D$.
\end{claim}

We define $G_1$ and $G_2$ as in the proof of Claim \ref{chord}.
We apply the induction hypothesis first to $G_1$.
Although $u$ may be joined to several vertices with only two available colors, the minimality of $G_2$ implies that no such vertex is in $G_2-\{u,v_i,v_j\}$. There may be one or two vertices in $G_2-\{u,v_i,v_j\}$ that have only two available colors and which are joined to one of $v_i$ and $v_j$. We color any such vertex, and then at most six vertices of $G_2$ are colored. If possible, we apply the induction hypothesis to $G_2$. This is possible unless the coloring of $G_1$ is not valid in $G_2$. This happens only if $P$ has a vertex in $G_2$ joined to one of $v_i$ and $v_j$. This happens only if we have one of the two exceptional cases described in Claim \ref{2-path}.

\begin{claim}\label {3-path}
$G$ has no path of the form $v_iuwv_j$ such that $u$ and $w$ lie inside $D$, and $|L(v_i)|=2$.
Also, $G$ has no path $v_iuwv_j$ such that $u$ and $w$ lie inside $D$, $|L(v_i)|=3$, and $j\in \{1,q\}$.
\end{claim}

Repeating the arguments in Claims \ref{chord} and \ref{2-path}, we can easily get Claim \ref{3-path}.

\begin{claim}\label{empty cycle}
If $C$ is a circuit of $G$ distinct from $D$ and of length at most 6, then the interior of $C$ is empty.
\end{claim}

Otherwise, we can apply the induction hypothesis first to $C$ and its exterior and then to $C$ and its interior. This contradiction proves Claim \ref{empty cycle}.


If $|L(v_{q+2})|\geq3$, then we complete the proof by deleting $v_q$ and its product color from $G$, and apply the induction hypothesis to $G-v_q$ and obtain thereby a contradiction. So we assume $|L(v_{q+2})|\leq2$. By Claim \ref{path}, $|L(v_{q+2})|=2$ and thus $|L(v_{q+3})|\geq3$. If $|L(v_{q+4})|\geq3$, then we first color $v_{q+2}$ and $v_{q+1}$, then we delete them and their product colors from $G$. We obtain a contradiction by applying the induction hypothesis to the resulting graph. By Claims \ref{chord} and \ref{2-path} this is possible unless $q=6$ and $G$ has a vertex $u$ inside $D$ joined to both $v_4$ and $v_7$. In this case we color $u$ and delete both $v_5$ and $v_6$ before we apply the induction hypothesis.
Hence, we may assume that $|L(v_{q+4})|\leq 2$.

We give $v_{q+3}$ a color not in $\{ \alpha \sigma(v_{q+3}v_{q+4})\colon\ \alpha \in L(v_{q+4})\}$ and then color $v_{q+2}$ and $v_{q+1}$, and finally we delete $v_i$ and the product color of $v_i$ from $G$ for $i\in \{q+1,q+2,q+3\}$.
We obtain a contradiction by applying the induction hypothesis to the resulting graph.
If $q=6$ and $G$ has a vertex $u$ inside $D$ joined to $v_4$ and $v_7$, then, as above, we color $u$ and delete $v_5$ and $v_6$ before we use induction.
If $q=6, q+3=k$, and $G$ has a vertex $u'$ inside $D$ joined to $v_3$ and $v_k$, then we also color $u'$ and delete $v_1$ and $v_2$ before we use induction.
Finally, there may be a path $v_{q+1}wzv_{q+3}$ where $w$ and $z$ lies inside $D$.
By Claim \ref{empty cycle}, this path is unique.
We color $w$ and $z$ and delete them together with their product colors from $G$ before we use induction.
Note that $u$ and $u'$ may also exist in this case.
If there are vertices joined to two colored vertices, then we also color these vertices before we use induction.

The colored vertices divide $G$ into parts, and we shall show that each part satisfies the induction hypothesis.
By second statement of Claim \ref{3-path}, there are at most six precolored vertices in each part, and they induce a path.
Claim \ref{2-path} and the first statement of Claim \ref{3-path} imply that
there is no vertex with precisely two available colors on $D$ which is joined to a vertex inside $D$ whose list has only two available colors after the additional coloring.
Since $G$ has girth at least 5 and by Claim \ref{empty cycle},
there is no other possibility for two adjacent vertices $z$ and $z'$ to have only two available colors in their lists, as both $z$ and $z'$ must be adjacent to a vertex that has been colored and deleted.

This contradiction completes the proof.
\end{proof}
\end{document}